\newtheorem{theorem}{Theorem}
\newtheorem{proposition}[theorem]{Proposition}
\newtheorem{corollary}[theorem]{Corollary}
\newtheorem{lemma}[theorem]{Lemma}
\newtheorem*{unnumberedtheorem}{Theorem}
\theoremstyle{definition}
\newtheorem{definition}[theorem]{Definition}
\theoremstyle{remark}
\newtheorem{remark}[theorem]{Remark}
\newcommand{\A}{\mathbb{A}}
\newcommand{\Z}{\mathbb{Z}}
\newcommand{\Q}{\mathbb{Q}}
\newcommand{\lambdacan}{\lambda^{\mathrm{can}}}
\newcommand{\Qbar}{\overline{\mathbb{Q}}}
\newcommand{\cL}{\mathcal{L}}
\newcommand{\cG}{\mathcal{G}}
\newcommand{\cF}{\mathcal{F}}
\newcommand{\fF}{\mathfrak{F}}
\newcommand{\fq}{\mathfrak{q}}
\newcommand{\fQ}{\mathfrak{Q}}
\newcommand{\fw}{\mathfrak{w}}
\newcommand{\fl}{\mathfrak{l}}
\newcommand{\Gal}{\mathrm{Gal}\,}
\newcommand{\Sym}{\mathrm{Sym}}
\newcommand{\Ad}{\mathrm{Ad}\,}
\newcommand{\Frob}{\mathrm{Frob}}
\newcommand{\Hom}{\mathrm{Hom}\,}
\newcommand{\Prim}{\mathit{Prim}}
\newcommand{\ad}{\mathrm{ad}}
\newcommand{\Sp}{\mathrm{Sp}}
\newcommand{\GL}{\mathrm{GL}}
\newcommand{\PSL}{\mathrm{PSL}}
\newcommand{\SL}{\mathrm{SL}}
\newcommand{\ssm}{{^{\mathrm{ss}}}}
\newcommand{\Fbar}{\overline{F}}
\newcommand{\Kbar}{\overline{K}}
\newcommand{\F}{\mathbb{F}}
\newcommand{\eps}{\epsilon}
\newcommand{\notdiv}{\!\!\not|\,}
\newcommand{\begstrat}{
\par\vspace{-0.3\baselineskip}\hfill\begin{boxedminipage}[t]{4.7in}
\hspace{-0.3in}\begin{minipage}{4.9in}
\begin{description}
}
\newcommand{\ra}{\rightarrow}
\title[Potential automorphy of odd-dimensional Galois representations]{On the potential automorphy of certain odd-dimensional Galois representations}
\author{Thomas Barnet-Lamb}
\email{tbl@math.harvard.edu} 
\address{Department of Mathematics\\Harvard University\\Cambridge\\MA 02138\\USA} 
\begin{document}

\subjclass{11R39 (primary), 11F23 (secondary)}
\keywords{Galois representation, potential automorphy, potential modularity, Dwork hypersurface}
\thanks{The author was partially supported by NSF grant DMS-0600716 and by a Jean E.~de Valpine Fellowship.} 

\begin{abstract}
In a previous paper, the potential automorphy of certain Galois representations to $\GL_n$ for $n$ even was established, following work of Harris, Shepherd-Barron and Taylor and using the lifting theorems of Clozel, Harris and Taylor. In this paper, we extend those results to $n=3$ and $n=5$, and conditionally to all other odd $n$. The key additional tools necessary are results which give the automorphy or potential automorphy of symmetric powers of elliptic curves, most notably those of Gelbert, Jacquet, Kim, Shahidi and Harris.
\end{abstract}
\maketitle

\section{Introduction}

In \cite{tbl-potmod}, it is established that certain even-dimensional Galois representations become automorphic when one makes a suitably large totally-real field extension. This paper has two main aims. The first is to extend those results to three- and five-dimensional representations; in particular, to prove the following theorem, which refers to a constant $C(n,N)$ which will be defined in the proof:

\begin{theorem} \label{main-theorem-1}
Suppose that $F/F_0$ is a Galois extension of CM fields and that $n=3$ or $5$. Suppose $N\geq n+6$ an even integer, and suppose that $l>C(n,N)$ is a prime which is unramified in $F$ and $l\equiv 1 \mod N$. Let $v_q$ be a prime of $F$ above a rational prime $q\neq l$ such that $q\notdiv N$. Let $\cL$ be a finite set of primes of $F$ not containing primes above $lq$.

Suppose that we are given a representation
$$r:\Gal (\Fbar/F) \ra \GL_n(\Z_l)$$
with the following properties (we will write $\bar{r}$ for the semisimplification of the reduction of $r$):
\begin{enumerate}
\item $r$ ramifies only at finitely many primes.
\item $r^c \cong r^\vee\eps_l^{1-n}$, with Bellaiche-Chenevier sign\footnote{More concretely: we can think of the isomorphism $r^c \cong r^\vee\eps_l^{1-n}$ as giving us a pairing $\langle*,*\rangle$ on $(\Z_l)^n$ satisfying $\langle r(\sigma) v_1,r({}^c\sigma) v_2\rangle=\eps_l^{1-n}(\sigma)\langle v_1,v_2\rangle$ for each $\sigma\in \Gal (\Fbar/F)$ and $v_1,v_2\in(\Z_l)^n$. If $r$ is in addition assumed to be absolutely irreducible, this pairing will either be symmetric or antisymmetric---and whether it is symmetric or antisymmetric turns out to only depend on $r$. We define the \emph{sign} of $r$ to be +1 if the pairing is symmetric, -1 if it is antisymmetric. This is the appropriate generalization of an `odd' two-dimensional Galois representation over a totally-real field; just as even representations are rather badly behaved, so are representations with sign -1; we would not expect to get good results for such representations.} +1
\item For each prime $\fw|l$ of $F$, $r|_{\Gal(\Fbar_\fw/F_\fw)}$ is crystalline with Hodge-Tate numbers $\{0,1,\dots, n-1\}$
\item $r$ is unramified at all the primes of $\cL$
\item $(\det \bar{r})^2\cong \eps_l^{n(1-n)}$ mod $l$
\item Let $r'$ denote the extension of $r$ to a continuous homomorphism $\Gal(\Fbar/F^+)\ra\cG_n(\Qbar_l)$ as described in section 1 of \cite{cht}; then $\bar{r}'(\Gal(\Fbar/F(\zeta_l))$ is `big' in the sense of `big image'. 
\item $\Fbar^{\ker\ad \bar{r}}$ does not contain $F(\zeta_l)$
\item $\bar{r}$ satisfies, for each prime $\fw|l$ of $F$:
$$\bar{r}|_{I_{F_w}} \cong 1 \oplus \eps_l^{-1} \oplus \dots \oplus  \eps_l^{1-n} $$
\item $r|_{\Gal(\Fbar_{v_q}/F_{v_q})}\ssm $ and $\bar{r}|_{\Gal(\Fbar_{v_q}/F_{v_q})}$ are unramified, with $r|_{\Gal(\Fbar_{v_q}/F_{v_q})} \ssm$ having Frobenius eigenvalues $1, (\#k(v_q)), \dots, (\#k(v_q))^{n-1}$
\end{enumerate}

Then there is a CM field $F'$ which is Galois over $F_0$ and linearly independent from  $\Fbar^{\ker\ad \bar{r}}$ over $F$. Moreover, all primes of $\cL$ and all primes of $F$ above $l$ are unramified in $F'$. Finally, there is a prime $w_q$ of $F'$ over $v_q$ such that $r|_{\Gal(\Fbar/F')}$ is automorphic of weight 0 and type $\{\Sp_n(1)\}_{\{w_q\}}$.
\end{theorem}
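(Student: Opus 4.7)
The plan is to follow the Harris--Shepherd-Barron--Taylor paradigm of potential automorphy: produce, over a suitable CM extension $F'/F$, an \emph{automorphic friend} $r_1$ whose residual representation coincides with $\bar{r}|_{\Gal(\Fbar/F')}$, and then conclude via the modularity lifting theorems of Clozel--Harris--Taylor \cite{cht}. Given the Hodge--Tate weights $\{0,1,\dots,n-1\}$, the sign $+1$, and the inertial shape at $l$ prescribed by condition (8), the obvious candidate for $r_1$ is $\Sym^{n-1}$ of the Tate module of an elliptic curve, whose automorphy (for $n=3$ and $n=5$) is the additional input from Gelbart--Jacquet and Kim alluded to in the abstract.

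The steps I would carry out are as follows. First, I would choose an auxiliary prime $l'\neq l$, coprime to the ramification of $r$ and to $q$, and make a preliminary CM base change from $F$ to $F_1$, Galois over $F_0$ and linearly disjoint from $\Fbar^{\ker\ad\bar{r}}$, so as to simplify subsequent local constructions while preserving all hypotheses on $r$. Second, I would apply a Moret-Bailly / Rumely-type theorem on a moduli space of elliptic curves with level $ll'$-structure to find a CM extension $F'/F_1$, linearly disjoint from $\Fbar^{\ker\ad\bar{r}}$ over $F$, and an elliptic curve $E$ over $F'^+$ such that: (a) $\Sym^{n-1}E[l]$ agrees with $\bar{r}|_{F'}$ up to a finite-order twist; (b) $\Sym^{n-1}E[l']$ is induced from a character of a CM subextension of $F'^+$, so that it is automorphic by CM theory and has big enough image for the lifting theorems; and (c) $E$ has prescribed reduction behaviour at the primes of $v_q$, $\cL$, and those above $l$ and $l'$. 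Third, I would invoke Gelbart--Jacquet ($n=3$) or Kim ($n=5$), combined with the potential automorphy of symmetric powers of elliptic curves of Harris and collaborators, to conclude that $\Sym^{n-1}(T_{l'}E)|_{F'}$ is automorphic. Fourth, applying Clozel--Harris--Taylor modularity lifting at $l'$ (with residual automorphy coming from (b)) yields the automorphy of the whole compatible system attached to $\Sym^{n-1}(T_\bullet E)$; in particular $\Sym^{n-1}(T_lE)$ is automorphic, and since its residual representation matches $\bar{r}|_{F'}$ by (a), one further application of modularity lifting at $l$ delivers the automorphy of $r|_{\Gal(\Fbar/F')}$.

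The main obstacle lies in step two, in particular ensuring that $\bar{r}|_{F'}$ can actually be realised (up to twist) as $\Sym^{n-1}$ of a two-dimensional mod-$l$ representation that itself comes from an elliptic curve. The inertial shape of condition (8) and the determinant condition (5) are precisely what one expects for such a symmetric power, which suggests that the obstruction dissolves after sufficient CM base change; but verifying this carefully, and then solving the Moret-Bailly problem of producing a global point on the elliptic moduli satisfying all prescribed local conditions simultaneously at $l$, $l'$, $v_q$, and the primes of $\cL$, is the technical heart of the argument. Tracking the linear disjointness and big-image conditions through each base change, and calibrating the constant $C(n,N)$ so that the big-image hypotheses required by the modularity lifting theorems hold at both $l$ and $l'$, constitutes the remaining bookkeeping.
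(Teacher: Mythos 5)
Your proposal contains a fundamental gap that the paper's argument is specifically designed to avoid, so let me flag it precisely. You propose, in step two, to produce an elliptic curve $E$ over $F'^+$ such that $\Sym^{n-1}E[l]$ agrees with $\bar{r}|_{F'}$ up to a finite-order twist. This cannot work under the hypotheses of the theorem. The image of $\Sym^{n-1}$ on $\GL_2(\F_l)$ has projective image isomorphic to $\mathrm{PGL}_2(\F_l)$, a $3$-dimensional group, whereas $\bar{r}$ is required (condition (6)) to have big image in $\cG_n$, and you are further required to produce $F'$ linearly disjoint from $\Fbar^{\ker\ad\bar{r}}$ over $F$, which forces $\ad\bar{r}|_{G_{F'}}$ to have the \emph{same} image as $\ad\bar{r}$. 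So if $\bar{r}$ has (projective) image as large as $\mathrm{PSL}_n(\F_l)$ --- nothing in the hypotheses rules this out, and indeed this is the generic case --- then $\bar{r}|_{F'}$ cannot possibly factor through $\Sym^{n-1}$ of any two-dimensional representation. You identify this as "the main obstacle" but then assert it "dissolves after sufficient CM base change"; it does not, precisely because of the linear disjointness you must maintain. In short, you are asking the elliptic curve to serve simultaneously as the automorphic friend \emph{and} as the interpolating object that matches $\bar{r}$ mod $l$, and it is not flexible enough for the second role.

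The paper's actual strategy uses two separate devices, and keeping them separate is the point. The automorphic friend is $r_E = \Sym^{n-1}H^1(E,\Z_{l'})$ for an auxiliary elliptic curve $E$ over $\Q$ chosen purely for its good local behaviour at $l$, $l'$, $q$, and $\cL$ (Step B); its potential automorphy is supplied by combining the $n=2$ Harris--Shepherd-Barron--Taylor theorem with Gelbart--Jacquet and Kim--Shahidi for $n=3,5$ (Steps D, Corollary \ref{autocorr}). The interpolating object is not an elliptic curve at all but rather a fibre $Y_t$ of the Dwork family of hypersurfaces, whose $n$-dimensional cohomology piece $\Prim_{[v]}$ has Zariski-dense monodromy in $\{A\in\GL_n : \det A=\pm 1\}$ (Proposition \ref{geom-lemma}(7)). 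This large monodromy is exactly what makes a Moret-Bailly argument (Proposition \ref{geom-prop}) capable of producing a $t$, over a CM extension $F_0$, with $\Prim[l]_{[v],t}(\vec h)\otimes\bar\chi_1 \cong \bar{r}|_{G_{F_0}}$ \emph{and} $\Prim[l']_{[v],t}(\vec h)\otimes\bar\chi_2 \cong \bar{r}_E|_{G_{F_0}}$ simultaneously. Modularity lifting at $l'$ then propagates automorphy from $r_E$ to $\Prim_{l'}$ (Step E), the compatible system carries it to $\Prim_l$, and a second lifting at $l$ delivers it to $r$ (Step F). Your step 2(b), asking $\Sym^{n-1}E[l']$ to be dihedral, is also misplaced: the friend's automorphy comes from Gelbart--Jacquet/Kim applied to $E$ itself, not from a dihedral mod-$l'$ residual representation, and requiring dihedral behaviour mod $l'$ would in fact risk making $E$ CM and killing the applicability of those functoriality theorems.
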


The second aim is to make, conditional on some conjectures of Michael Harris and co-workers which are expected to become theorems by 2010, a further extension to all remaining odd $n$, using work of Harris \cite{harris:pao} which establishes the potential automorphy of all odd-dimensional symmetric powers of elliptic curves subject to these `expected theorems'. To properly state these expected theorems and set them in the appropriate context takes most of the first section of \cite{harris:pao}, so I will not give a restatement of them here, but simply remark that they can be found as Expected Theorems 1.2, 1.4 and 1.7 there. For the remainder of this article, we shall refer to these statements as `the expected theorems of \cite{harris:pao}'.

We then have:
\begin{theorem} \label{main-theorem-2}
Suppose that we admit the expected theorems of \cite{harris:pao}, and let $F/F_0$ be a Galois extension of CM fields and $n$ an odd integer. Suppose further $N\geq n+6$ an even integer, that $l>C(n,N)$ is a prime which is unramified in $F$, and that $l\equiv 1 \mod N$. Let $\cL$ be a finite set of primes of $F$ not containing primes above $lq$.

Suppose that we are given a representation
$$r:\Gal (\Fbar/F) \ra \GL_n(\Z_l)$$
with the following properties:
\begin{enumerate}
\item $r$ ramifies only at finitely many primes.
\item $r^c \cong r^\vee\eps_l^{1-n}$, with Bellaiche-Chenevier sign +1
\item For each prime $\fw|l$ of $F$, $r|_{\Gal(\Fbar_\fw/F_\fw)}$ is crystalline with Hodge-Tate numbers $\{0,1,\dots, n-1\}$
\item $r$ is unramified at all the primes of $\cL$
\item $(\det \bar{r})^2\cong \eps_l^{n(1-n)}$ mod $l$
\item Let $r'$ denote the extension of $r$ to a continuous homomorphism $\Gal(\Fbar/F^+)\ra\cG_n(\Qbar_l)$ as described in section 1 of \cite{cht}; then $\bar{r}'(\Gal(\Fbar/F(\zeta_l))$ is `big' in the sense of `big image'. 
\item $\Fbar^{\ker\ad \bar{r}}$ does not contain $F(\zeta_l)$
\item $\bar{r}$ satisfies, for each prime $\fw|l$ of $F$:
$$\bar{r}|_{I_{F_w}} \cong 1 \oplus \eps_l^{-1} \oplus \dots \oplus  \eps_l^{1-n}$$
\end{enumerate}

Then there is a CM field $F'$ which is Galois over $F_0$ and linearly independent from  $\Fbar^{\ker\ad \bar{r}}$ over $F$. Moreover, all primes of $\cL$ and all primes of $F$ above $l$ are unramified in $F'$. Finally, $r|_{\Gal(\Fbar/F')}$ is automorphic of weight 0.
\end{theorem}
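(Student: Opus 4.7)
The plan is to follow the same strategy as for Theorem \ref{main-theorem-1}, with only one substantive change: whereas Theorem \ref{main-theorem-1} relied on the unconditional automorphy of $\Sym^{n-1}$ of elliptic curves for $n=3,5$ (due to Gelbart--Jacquet and Kim--Shahidi respectively), in the present situation we must instead invoke the potential automorphy of $\Sym^{n-1}$ of elliptic curves over CM fields for arbitrary odd $n$, which is the content of the expected theorems of \cite{harris:pao}. All the remaining components of the argument---the Harris--Shepherd-Barron--Taylor style Moret-Bailly construction used to realise $\bar r$ as coming from a motive, the bigness and local-condition analysis, and the automorphy lifting machinery---are insensitive to whether $n$ is small or large and so apply verbatim.

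In outline, I would first use a Moret-Bailly argument on a suitable moduli space of elliptic curves with level structure to produce a CM Galois extension $F'/F_0$, linearly disjoint from $\Fbar^{\ker \ad \bar r}$ over $F$ and unramified at $\cL$ and at primes above $l$, together with an elliptic curve $E/F'$ having good ordinary reduction above $l$ and satisfying $\bar\rho_{E,l}|_{I_{F'_{\fw}}} \cong 1 \oplus \eps_l^{-1}$ and $\Sym^{n-1}\bar\rho_{E,l} \cong \bar r|_{\Gal(\Fbar/F')}$. Hypothesis (8) is precisely the inertial compatibility at $l$ required on $\bar r$ for it to arise as a symmetric power of an ordinary $2$-dimensional representation; hypothesis (2) with sign $+1$ and hypothesis (5) align the self-duality and determinant data, since for any $2$-dimensional $\bar\rho$ with $\det\bar\rho = \eps_l^{-1}$ the representation $\Sym^{n-1}\bar\rho$ is self-dual with sign $+1$ (for odd $n$) and determinant $\eps_l^{-n(n-1)/2}$.

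Once $E/F'$ is in hand, I would invoke the expected theorems of \cite{harris:pao} to obtain, after a further CM base change kept linearly disjoint from $\Fbar^{\ker\ad\bar r}$, the automorphy of $\Sym^{n-1}\rho_{E,l}$. This representation and $r|_{\Gal(\Fbar/F')}$ have isomorphic residual representations by construction, and hypotheses (3), (6), (7), (8) are exactly the crystalline, bigness, and inertial conditions needed to apply the \cite{cht} automorphy lifting theorems; this transfers automorphy from $\Sym^{n-1}\rho_{E,l}$ to $r|_{\Gal(\Fbar/F')}$, yielding the required weight-$0$ automorphy.

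The hardest step will be the first: constructing an elliptic curve $E/F'$ whose residual symmetric power matches $\bar r$. Unlike the Dwork hypersurface family used for the even-dimensional theorem in \cite{tbl-potmod}, which provides directly an $n$-dimensional family of residual representations into which $\bar r$ can be fit, here the family of residual representations coming from elliptic curves is merely $2$-dimensional, so one is asking that $\bar r|_{F'}$ lie literally in the image of the symmetric power map on $2$-dimensional representations. Verifying that this can be arranged, and carrying out the Moret-Bailly step to produce a CM point on the relevant level-structure moduli space while simultaneously enforcing the ordinary condition at $l$, the good reduction condition at $\cL$, and the linear disjointness from $\Fbar^{\ker\ad\bar r}$, is the technical heart of the argument; once it is complete, the remainder of the proof is essentially mechanical.
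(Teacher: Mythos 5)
Your proposal diverges fundamentally from the paper's argument, and the divergence introduces a gap that cannot be repaired. You propose to produce, via a Moret-Bailly argument, an elliptic curve $E/F'$ with $\Sym^{n-1}\bar\rho_{E,l}\cong\bar r|_{\Gal(\Fbar/F')}$ directly, and then do a \emph{single} automorphy lifting step. But this asks that $\bar r|_{G_{F'}}$ land (up to conjugation and twist) inside the image of the symmetric-power homomorphism $\Sym^{n-1}\colon\GL_2(\F_l)\to\GL_n(\F_l)$, which is a small algebraic subgroup of $\GL_n$. Nothing in hypotheses (1)--(8) forces that: hypothesis (8) pins down $\bar r|_{I_{F_\fw}}$ at primes above $l$, but says nothing about the global image, and hypothesis (6) (bigness) in fact pushes the image to be large. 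Passing to $F'$ can only shrink the image, so Moret-Bailly cannot conjure this out of thin air; for a generic $\bar r$ satisfying the theorem's hypotheses, no such $E$ exists over any extension. You explicitly flag this as ``the technical heart'' to be verified, but it is not a verification that can generally succeed; it is the obstruction.

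The paper's actual proof avoids this by interposing the Dwork family: Section~\ref{sec-dwork} extends Proposition~\ref{geom-prop} to odd $n$ (your remark that the Dwork family is only available in the even-dimensional setting of \cite{tbl-potmod} is incorrect; that extension is exactly what Section~\ref{sec-dwork} provides). The proof then chooses an auxiliary prime $l'$ and an elliptic curve $E/\Q$, and uses Proposition~\ref{geom-prop} to find a member $Y_t$ of the Dwork family and a CM extension $F_0/F$ over which $\Prim[l]_{[v],t}$ matches $\bar r$ \emph{and} $\Prim[l']_{[v],t}$ matches $\Sym^{n-1}\bar\rho_{E,l'}$. Automorphy is seeded from $\Sym^{n-1}\rho_{E,l'}$ via Theorem~\ref{ha:thm}, lifted to $\Prim_{l',[v],t}$ (Step E), carried across the compatible system of $Y_t$ to $l$, and lifted once more to $r$ (Step F). Two lifting steps, with the Dwork motive as the bridge, is the essential mechanism. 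A secondary problem in your proposal is that you apply Harris's Theorem~\ref{ha:thm} at the prime $l$ itself, whereas that theorem imposes $l>C'(n)$ and $l$ split in a specific field $M$; the hypotheses of Theorem~\ref{main-theorem-2} do not guarantee these. The paper sidesteps this by choosing the auxiliary prime $l'$ to satisfy the needed constraints, another reason the two-prime Dwork-family architecture is needed rather than optional.
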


It is worth also remarking that, it is possible to use the freedom to vary $N$ to deduce corollaries where the congruence condition on $l$ is removed, replaced with a condition that $l$ avoid a certain set of primes of Dirichlet density zero. See Appendix \ref{anal-appendix} for the details. While these are strict weakenings of the theorems above, they may be useful for applications.

\begin{corollary} \label{cor-main-theorem-1}
Suppose that  $n$ is 3 or 5. There is a set $\Lambda$ of rational primes whose complement has Dirichlet density 0 and a function $N:\Lambda\ra\Z$ with the following property. Suppose that $F/F_0$ is a Galois extension of CM fields and $l\in \Lambda$ is a prime which is unramified in $F$. Let $v_q$ be a prime of $F$ above a rational prime $q\neq l$, and $q\notdiv N(l)$. Let $\cL$ be a finite set of primes of $F$ not containing primes above $lq$.

Finally, suppose that we are given a representation $r:\Gal (\Fbar/F) \ra \GL_n(\Z_l)$ enjoying the properties (1--9) of Theorem \ref{main-theorem-1}, and such that property (6) is `robust' in the sense that it remains true when $\bar{r}$ is restricted to any subgroup of $\Gal(\Fbar/F)$ with cyclic quotient.

Then there is a totally real field $F'$ which is Galois over $F_0$ and linearly independent from  $\Fbar^{\ker\ad \bar{r}}$ over $F$. Moreover, all primes of $\cL$ and all primes of $F$ above $l$ are unramified in $F'$. Finally, there is a prime $w_q$ of $F'$ over $v_q$ such that $r|_{\Gal(\Fbar/F')}$ is automorphic of weight 0 and type $\{\Sp_n(1)\}_{\{w_q\}}$.
\end{corollary}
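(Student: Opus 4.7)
The plan is to deduce Corollary \ref{cor-main-theorem-1} from Theorem \ref{main-theorem-1} by choosing, for each prime $l$, an appropriate auxiliary integer $N = N(l)$ meeting the theorem's hypotheses. The price is that the congruence $l \equiv 1 \mod N$ cannot be made uniform in $l$, so one must argue separately that the set $\Lambda$ of primes where some valid $N(l)$ exists has Dirichlet density $1$; this number-theoretic input is the real content of the corollary and is the business of Appendix \ref{anal-appendix}.

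First I would define $N(l)$ to be any even integer $N \geq n+6$ with $N \mid l-1$ and $l > C(n, N)$, and take $\Lambda$ to be the set of primes $l$ for which such an $N$ exists. Given such a choice, the hypotheses of Theorem \ref{main-theorem-1} transfer almost verbatim: conditions (1)--(5) and (7)--(9) are identical, the congruence $l \equiv 1 \mod N$ and the bound $l > C(n,N)$ hold by construction, and property (6) is among the assumptions of the corollary. The theorem then produces the desired $F'$, the unramifiedness properties, and the automorphy of $r|_{\Gal(\Fbar/F')}$ of weight $0$ and type $\{\Sp_n(1)\}_{\{w_q\}}$.

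The robustness of (6) is what allows the argument in the appendix to pass freely to cyclic extensions of $F$---say, to fix the behaviour of $r$ at an auxiliary prime, or to cut $\Lambda$ out of a union of arithmetic progressions---without losing the big-image hypothesis; stability of the class of subgroups with cyclic quotient under further restriction ensures that robustness survives any finite tower of such extensions. The hard step remains the density claim for $\Lambda$: concretely, one must show that, outside a set of primes of Dirichlet density zero, $l-1$ has an even divisor in a window $[n+6, f(l)]$, where $f$ controls the bound $l > C(n,N)$. Provided $C(n,N)$ grows only polynomially in $N$---which should be visible from the proof of Theorem \ref{main-theorem-1}---this is exactly the kind of statement accessible via a Brun--Hooley sieve or an Erd\H{o}s-type result on divisors of shifted primes, and it is this input that Appendix \ref{anal-appendix} is expected to supply.
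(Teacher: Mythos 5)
Your top-level plan---choose an admissible $N=N(l)$ for each $l\in\Lambda$, invoke Theorem \ref{main-theorem-1} with that $N$, and offload the work to a density statement about $\Lambda$---is exactly the structure the paper uses. The transfer of hypotheses (1)--(9) is correct and routine, and you are right that the only real content is the density claim.

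Where the proposal goes wrong is the density argument itself. You formulate it as: for $l$ outside a density-zero set, $l-1$ has an even divisor $N$ in a window $[n+6, f(l)]$ where $f$ controls the bound $l>C(n,N)$, and you propose to establish this via a Brun--Hooley sieve or an Erd\H{o}s-type divisor result, \emph{conditional on $C(n,N)$ growing only polynomially in $N$}. None of this is needed, and the polynomial-growth hypothesis on $C(n,N)$ is not established (or even asserted) anywhere in the paper---$C(n,N)$ arises from Nori's theorem via Corollary \ref{monodromy-mod-prop}, and no quantitative control is claimed. The paper's Appendix \ref{anal-appendix} instead exploits the fact that one never needs infinitely many moduli at once: fix $\eps>0$ and choose finitely many even $N_1,\dots,N_k>n$ (e.g.\ $N_i=2p_i$ for consecutive primes $p_i>n$) so that the primes $\equiv 1\pmod{N_i}$ for some $i$ have Dirichlet density $>1-\eps$; this needs only Dirichlet's theorem and the divergence of $\prod_p(1-1/p)$ to $0$. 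The constraint $l>C(n,N_i)$ excludes only finitely many primes for each fixed $N_i$, hence finitely many in total, and finite sets have density zero. So $D^-(\Lambda)\geq 1-\eps$ with no growth information about $C(n,N)$ whatsoever, and letting $\eps\to 0$ finishes. Your approach, by contrast, tries to let the modulus $N$ grow with $l$, which forces you into sieve theory and into the unsupported growth hypothesis; if $C(n,N)$ were, say, superexponential in $N$, your window argument would collapse while the paper's argument would be unaffected.

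A secondary point: your gloss on the robustness hypothesis for condition (6)---that it allows the appendix to ``pass freely to cyclic extensions of $F$''---is speculative and does not match the paper. The appendix is a purely analytic statement about sets of primes and makes no reference to (6), cyclic extensions, or the Galois representation at all. Whatever the robustness hypothesis is for, it is not invoked by the density argument, and attributing it that role misrepresents the structure of the proof.
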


\begin{corollary} \label{cor-main-theorem-2}
Suppose that we admit the expected theorems of \cite{harris:pao}, and let $n$ be an integer. There is a set $\Lambda$ of rational primes whose complement has Dirichelet density 0 and a function $N:\Lambda\ra\Z$ with the following property. Suppose that $F/F_0$ is a Galois extension of CM fields and $l\in \Lambda$ is a prime which is unramified in $F$. Let $\cL$ be a finite set of primes of $F$ not containing primes above $l$.

Finally, suppose that we are given a representation $r:\Gal (\Fbar/F) \ra \GL_n(\Z_l)$ enjoying the properties (1--8) of Theorem \ref{main-theorem-2}, and such that property (6) is `robust' in the sense that it remains true when $\bar{r}$ is restricted to any subgroup of $\Gal(\Fbar/F)$ with cyclic quotient.

Then there is a totally real field $F'$ which is Galois over $F_0$ and linearly independent from  $\Fbar^{\ker\ad \bar{r}}$ over $F$. Moreover, all primes of $\cL$ and all primes of $F$ above $l$ are unramified in $F'$. Finally $r|_{\Gal(\Fbar/F')}$ is automorphic of weight 0.
\end{corollary}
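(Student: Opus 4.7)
The plan is to deduce this corollary from Theorem \ref{main-theorem-2} by exploiting the freedom to let the auxiliary integer $N$ vary with the prime $l$. Given an $l$, Theorem \ref{main-theorem-2} can be invoked as soon as we can produce an even integer $N \geq n+6$ with $l \equiv 1 \pmod{N}$ and $l > C(n,N)$. I would therefore define
$$\Lambda := \{ l \text{ prime} : \exists \text{ even } N \geq n+6 \text{ with } l \equiv 1 \pmod{N} \text{ and } l > C(n,N) \}$$
and let $N(l)$ be any admissible choice of $N$ for $l \in \Lambda$. With these definitions the hypotheses (1)--(8) of the corollary together with membership of $l$ in $\Lambda$ are exactly the hypotheses of Theorem \ref{main-theorem-2} for the parameter $N(l)$, so the conclusion of that theorem delivers the field $F'$ and the automorphy of $r|_{\Gal(\Fbar/F')}$ required by the corollary.

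The substantive content is then the assertion that $\Lambda$ has Dirichlet density one, which I would defer to the analytic argument in Appendix \ref{anal-appendix}. The key input is the growth rate of $C(n,N)$ in $N$, which one reads off from the proof of Theorem \ref{main-theorem-2}. Heuristically, for almost every prime $l$, the integer $l-1$ has many even divisors, and the argument should verify that at least one of them lies in a range admissible for $C(n,N)$, producing a valid $N(l)$; the complementary set of ``bad'' primes is then thin.

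The role of the robustness of hypothesis (6) is technical: the appendix argument is expected to permit an intermediate cyclic base change of $F$ (for instance to replace $r$ by its restriction to a small cyclic extension in order to adjust the congruence data attached to $l$), and the robustness condition is precisely what guarantees that hypothesis (6) survives such base changes and so continues to feed into Theorem \ref{main-theorem-2}. Once the automorphy is established over a CM extension of this base change, one descends back to $F$-level using the Bellaiche--Chenevier sign $+1$ condition on $r$, which is preserved throughout. The principal obstacle is thus the analytic counting argument in the appendix producing $N(l)$ uniformly outside a density-zero set; everything else is formal from Theorem \ref{main-theorem-2}.
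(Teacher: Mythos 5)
Your core approach matches the paper's: you define $\Lambda$ as the set of primes $l$ for which some even $N\geq n+6$ exists with $l\equiv 1\pmod N$ and $l>C(n,N)$, let $N(l)$ be any such choice, apply Theorem \ref{main-theorem-2} directly, and defer the density-one claim to the analytic proposition in Appendix \ref{anal-appendix}. This is precisely the intended deduction; the paper regards it as ``trivial'' once the appendix proposition is in hand, and the proposition applies to your $\Lambda$ since for each even $N\geq n+6$ one may take $C(N)=C(n,N)$.

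However, two of your elaborations are not supported by the paper and should be removed. First, the ``growth rate of $C(n,N)$ in $N$'' plays no role: the appendix argument fixes a finite list of even moduli $N_1,\dots,N_k$ (twice consecutive primes above $n$), observes that the set of primes $\equiv 1 \pmod{N_i}$ for some $i$ has Dirichlet density tending to $1$ as $k\to\infty$, and notes that the finitely many primes below $\max_i C(n,N_i)$ are irrelevant to density. No quantitative control on $C(n,N)$ is needed or used. Second, and more substantively, your explanation of the ``robustness'' hypothesis on property (6) --- an intermediate cyclic base change ``to adjust congruence data attached to $l$'' followed by a descent using the Bellaiche--Chenevier sign --- is invented. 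The appendix contains no Galois-theoretic manipulation whatsoever; it is pure Dirichlet-density calculus, and the deduction of the corollary as the paper actually sets it up makes no visible use of the robustness strengthening. If you cannot locate where a hypothesis is used, say so rather than fabricating a mechanism; the mechanism you describe (``descends back to $F$-level'') is not even consistent with the corollary's conclusion, which asserts automorphy over a base change $F'$, not over $F$.
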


\medskip
I close the introduction with a simplified overview of the methods necessary to prove the two theorems above, with particular attention given to what is novel in the proofs. The basic structure of all potential automorphy proofs follows the seminal work of Taylor \cite{taylor2003rcf}. Two main ingredients are necessary. The first is a good supply of representations with certain properties, most notably that they are known \emph{a priori} to be automorphic. The second is a family $\fF$ of varieties whose cohomology is `very flexible'. By \emph{very flexible}, I mean that given specified $l$-adic and $l'$-adic representations $r$ and $r'$, and subject to certain conditions, we can find (over some suitably large totally real field) an element $V$ of the family whose mod $l$ cohomology looks like the residual representation of $r$, and whose mod $l'$ cohomology looks like the residual representation of $r'$. 

These two ingredients are then applied to prove the theorem as follows. We then apply the very flexible cohomology property taking $r$ to be the Galois representation which we would like to show is potentially modular and $r'$ to be one of our good supply of representations already known to be modular. Over the field of definition of this variety (which may well be a very large  extension of the field we started with) $r$ and the cohomology of $V$ (resp $r'$ and the cohomology of $V$) agree mod $l$ (resp $l'$). We can then apply a modularity lifting theorem twice; once, to deduce that the cohomology of $V$ is modular (since it agrees mod $l'$ with $r'$), then again to deduce that $r$ is modular (since it agrees mod $l$ with the cohomology of $V$). 

In practice, the actual argument involves a few more steps, since one has to accommodate various conditions that the lifting theorems have, most notably conditions that the representations we work with must be Steinberg at certain places. (For an overview explaining slightly more of the details, I refer the reader to the introduction to \cite{tbl-potmod}.) But this is more-or-less the flavor of the argument. 

The reason that the earlier paper had to restrict itself to even-dimensional representations was not a restriction in the part of the proof constructing the the family with very flexible cohomology, but rather in the supply of `good' Galois representations $r'$ already known to be modular. In particular, \cite{tbl-potmod} followed the paper \cite{hsbt} in taking these $r'$ to be induced from a character of a CM extension of $\Q$.  Such an $r'$ must necessarily be even dimensional, and so the results are automatically restricted to even dimensional representations.

The principal new idea in the present article is to use a different source of `already automorphic' Galois representations; in particular, we will use the symmetric powers of the cohomology of a suitably chosen elliptic curve. For $n=3$ and $n=5$, we will be able to take the symmetric square and fourth power, which are known unconditionally to be automorphic, and cuspidal in the appropriate cases, through work of Gelbert-Jacquet and Kim-Shahidi respectively \cite{gelbart1978rba, kim-sym4, kim2002csp}. For larger odd $n$, we use the result of \cite{harris:pao}, which as has already been mentioned proves, subject to the expected theorems, that all remaining symmetric powers are potentially automorphic. (Potential automorphy, rather than true automorphy, is good enough for our purposes.)

\medskip
The organization of the remainder of this paper is as follows. In section \ref{sec-dwork} we explain the very simple modifications necessary to the arguments of \cite{tbl-potmod} to extend the construction of motives with very flexible cohomology to the odd dimensional case. In section \ref{sec-review-powers} we briefly review the literature (\cite{gelbart1978rba, kim-sym4, kim2002csp}) on symmetric powers of 2 dimensional Galois representations and the results of Harris in \cite{harris:pao} and use these results to construct the supply of `good' representations which we need. Finally, in section \ref{sec-proof-main-thm} we put these pieces together to get a proof of the main theorem.

\medskip
{\bf Acknowledgements:} I would like to thank my advisor, Richard Taylor, for immeasurable assistance in all aspects of the preparation of this paper.  I would also like to thank Toby Gee for encouraging me to attempt to tackle the case of $n>5$ conditionally on the book project. Finally, I would like to thank both Barry Mazur, and the anonymous referee, for several helpful suggestions.

\section{Extending the analysis of the Dwork family} \label{sec-dwork}

In this section we describe how the geometric arguments in section 2 of \cite{tbl-potmod} can be extended to cover the case of odd $n$. The changes necessary are very straightforward. We follow the first page of section 2 of \cite{tbl-potmod}, which sets up notation, completely unaltered. The first difference between the analysis here and in \cite{tbl-potmod} comes when we reach the point just before Proposition 4 where a choice of a certain even-dimensional piece of the cohomology $\Prim_{l,[v]}$ is made, corresponding to a choice of an element $[v]\in (\Z/N\Z)_0/\langle W\rangle$. At this point, we assume that we have an \emph{odd} number $n$, $n>1$; we then write $n=2k+1$ and we choose a different $v$, viz:
\begin{align*}
v=(0,0,\dots,0,1,&k+2,k+3,\dots, N/2-2,N/2,\\&N/2+1,\dots, N-k-3, N-k-2,N-2)
\end{align*}
where we include every number once, except that we \emph{omit} the ranges $2,3,\dots,k+1$ and $N-k-1,\dots,N-3$, together with the singletons $N/2-1$ and $N-1$, and where the number of 0s at the beginning is $n+1$, calculated to ensure that there are $N$ numbers in total. Note that these numbers add up to 0 mod $N$. Note also that the ranges `make sense' as long as $N> n+3$; for instance, if $n=3, N=8$, we take $v=(0,0,0,0,1,4,5,6)$.

We then have the following analogues of Proposition 3, Corollary 4, and Lemma 6 of \cite{tbl-potmod}: (As usual, we write $\Prim_{l,[v],t}$ for the stalk of $\Prim_{l,[v]}$ at $t$, \emph{etc}.)

\begin{proposition} \label{geom-lemma}
We have the following facts about the varieties $Y_t$ and the sheaves
$\Prim[l]_{[v]}$, $\Prim_{l,[v]}$ and $\cF^i[l]$. Let $F$ be a a
number field. Recall that we are assuming $l\equiv 1$ mod $N$ throughout. 
\begin{enumerate}
\item \label{geom-lemma-good-red}
If $t\in T^{(l)}_0(F)$ and $\fq$ is a place of $F$ such that $v_\fq(1-t^N)=0$, then $Y_t$ has good reduction at $\fq$.
\item \label{geom-lemma-dual}
Suppose $t\in T^{(l)}_0(F)$. The Galois representation 
$$\Prim_{l,[v],t}:\Gal(\Fbar/F) \ra GL_n(\Q_l)$$  
satisfies $\Prim_{l,[v],t}^c \cong \Prim_{l,[v],t}^\vee \eps_l^{2-N}$. Similarly $\Prim[l]_{[v],t}^c \cong \Prim[l]_{[v],t}^\vee \eps_l^{2-N}$. (Indeed these isomorphisms patch for different $t$ to give a sheaf isomorphism.)
\item \label{geom-lemma-ht}
The sheaf $\Prim_{l,[v]}$ has rank $n$. There is a tuple $\vec{h}=(h(\sigma))_{\sigma\in\Hom(F,\Qbar_l)}$, such that the  Hodge-Tate numbers of $\Prim_{l,[v]}$ at the embedding $\sigma$ are $\{h(\sigma),h(\sigma)+1,\dots,h(\sigma)+n-1\}$.

\item \label{geom-lemma-inertia}
Let $\vec{h}$ continue to denote the tuple defined in the previous part. Suppose $\fw|l$, and let $\sigma\in\Hom(F,\Qbar_l)$ denote the corresponding embedding. Then $\Prim_{l,[v],0}|_{I_\fw}\cong \eps_l^{-h(\sigma)}\oplus\eps^{-h(\sigma)-1}_l\oplus\dots\oplus\eps_l^{1-h(\sigma)-n}$, and $\Prim[l]_{[v],0}|_{I_\fw}\cong\eps_l^{-h(\sigma)}\oplus\eps^{-h(\sigma)-1}_l\oplus\dots\oplus\eps_l^{1-h(\sigma)-n}$
\item \label{geom-lemma-steinberg}
Let $\fq$ be a prime of $F$ above a rational prime which does not divide $N$. If $\lambda_\fq\in T^{(l)}_0(F_\fq)$ has $v_\fq(\lambda_\fq) < 0$, then $(\Prim_{l,[v],\lambda_\fq})\ssm$ is unramified, and $(\Prim_{l,[v],\lambda_\fq})\ssm(\Frob_\fq)$ has eigenvalues $$\{\alpha,\alpha\#k(\fq),\alpha(\#k(\fq))^2,\dots,\alpha(\#k(\fq))^{n-1}\}$$ for some $\alpha$.
\item \label{geom-lemma-unram-mod}
Let $\fq$ be a prime of $F$ above a rational prime which does not divide $N$. If $\lambda_\fq\in T^{(l)}_0(F_\fq)$ has $v_\fq(\lambda_\fq) < 0$ and $l|v_\fq(\lambda_\fq)$, then $(\Prim[l]_{[v],\lambda_\fq})$ is unramified (even without semisimplification).
\item The monodromy of $\Prim_{l,[v]}$ is Zariski dense in $\{A\in\GL_n|\det A=\pm 1\}$.
\end{enumerate} 
\end{proposition}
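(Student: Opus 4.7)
The plan is to follow the arguments for Proposition 3, Corollary 4, and Lemma 6 of \cite{tbl-potmod} essentially line-by-line, since most of those arguments depend only on generalities about the Dwork family and its $W$-decomposition and not on the precise choice of the element $v$. Parts (1), (2), (5), and (6) should carry over verbatim: (1) amounts to the fact that $Y_t$ is smooth precisely when $1 - t^N$ is a unit; (2) is Poincar\'e duality on the $(N-2)$-dimensional variety $Y_t$, combined with the compatibility of that duality with the $W$-action and with complex conjugation sending $t\mapsto\bar t$; and (5), (6) follow from the standard analysis of the local monodromy of $\Prim_{l,[v]}$ near $t=\infty$, where the sheaf has a single unipotent Jordan block whose Frobenius eigenvalues form a geometric progression (and where the tameness of the local inertia produces the mod-$l$ unramifiedness in (6) as soon as $l$ divides the valuation).

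The claims that genuinely depend on the new $v$ are (3), (4), and (7). For the rank claim in (3), one checks directly that the $W$-orbit of $v$ has size $n=2k+1$; this is essentially immediate because $v$ has a unique entry equal to $1$ and its other entries avoid the specified symmetric ranges, so the stabiliser is trivial. The Hodge--Tate numbers at an embedding $\sigma$ are then computed one orbit element at a time by Griffiths residues: the Hodge type of the character piece labelled by $w = (w_0,\ldots,w_{N-1})$ is recorded by $\sum_i \langle \sigma(w_i)/N\rangle$, and the point of the chosen $v$ is that, as one scales it through by the units of $\Z/N\Z$, these sums take exactly $n$ consecutive integer values. This is a short combinatorial verification from the explicit formula for $v$. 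Part (4) then falls out: at $t=0$ the fibre $Y_0$ is the Fermat hypersurface $\sum x_i^N = 0$, whose primitive cohomology decomposes into one-dimensional pieces on which inertia at a prime above $l$ acts by the cyclotomic character raised to a power equal to the negative of the Hodge type, and assembling these pieces using (3) gives exactly the claimed direct sum of consecutive powers of $\eps_l$.

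For (7) I would invoke Katz's big-monodromy results for the hypergeometric sheaves attached to the Dwork pencil. The relevant inputs are that the local monodromy of $\Prim_{l,[v]}$ at $t=\infty$ is a single Jordan block of size $n$, while its local monodromy at a point with $t^N=1$ is a (generalised) pseudoreflection; Katz's classification of irreducible subgroups of $\GL_n$ generated by elements of these types then forces the Zariski closure of the monodromy to be as large as the determinant constraint from (2) allows, namely $\{A\in\GL_n : \det A = \pm 1\}$. The main obstacle, in my view, is the combinatorial verification in (3) that the specified $v$ produces \emph{consecutive} Hodge--Tate numbers rather than some more irregular pattern, since it is this feature (together with the matching inertia calculation in (4)) that makes the chosen motive usable as input to the modularity lifting theorems; everything else in the proposition is either standard Dwork-family geometry or a direct invocation of Katz's monodromy machinery.
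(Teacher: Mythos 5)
Your proposal reconstructs the underlying argument from first principles, whereas the paper's actual proof is a very short reduction: it observes that the proofs of Proposition 3, Corollary 4, and Lemma 6 of \cite{tbl-potmod} depend on $v$ only through three isolated combinatorial properties, namely (a) the value $0$ occurs more than once in $v$ and no other value repeats (so that Lemma 10.1 of \cite{katz2007ald} applies), (b) $-v$ is not a permutation of $v$, and (c) $v$ omits precisely $n$ residue classes mod $N$, and then it simply checks these three facts for the new $v$ and invokes the earlier argument verbatim. Your approach buys a more self-contained picture of how the good-reduction, duality, local-at-$\infty$, and monodromy arguments run, at the cost of re-deriving material that the paper is content to cite, and --- more importantly --- at the cost of needing every reconstructed step to be precisely right, which is where the issues below enter.

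Two places in your sketch need repair. First, your justification of the rank in part (3) is not the right invariant: you say the rank equals the size of the $W$-orbit of $v$ because ``the stabiliser is trivial,'' but a trivial stabiliser would make that orbit size equal $|W|$, not $n$, and in fact the correct input from Katz is condition (c) above --- that $v$ omits exactly $n$ congruence classes mod $N$ --- which is what governs the rank of $\Prim_{l,[v]}$. You should verify this count directly from the explicit $v$: the omitted classes are $2,\dots,k+1$ (there are $k$), $N-k-1,\dots,N-3$ (there are $k-1$), and the singletons $N/2-1$ and $N-1$, for a total of $2k+1=n$. Second, your treatment of part (7) invokes Katz's pseudoreflection/Jordan-block classification but never checks the non-self-duality condition (b), which is what actually rules out the monodromy landing in an orthogonal or symplectic group and hence is essential for concluding it is all of $\{A : \det A = \pm 1\}$. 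Your phrase about $v$ ``avoiding the specified symmetric ranges'' gestures in the right direction, but you should make the verification explicit: for instance, $v$ omits $N-1$ while $-v$ omits $1$, so the omitted sets differ and $-v$ cannot be a permutation of $v$. With those two points fixed, your argument would track the one the paper relies on.
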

\begin{proof} The proof of the corresponding proposition of \cite{tbl-potmod} only depends on $v$ insofar as it requires $v$ to possess the following properties:
\begin{itemize}
\item $v$ satisfies point (1) of the equivalent conditions in Lemma 10.1 of \cite{katz2007ald}, viz that the value 0 occurs more than once and no other value does.
\item $-v$ is not a permutation of $v$
\item $v$ omits precisely $n$ congruence classes mod $N$
\end{itemize}
Since our new $v$ is readily seen to possess these properties too, we can carry over the proof unchanged.
\end{proof}

\begin{corollary} \label{monodromy-mod-prop} There is a constant $C(n,N)$ such that if $M$ is an integer divisible only by primes $p>C(n,N)$ and if $t\in T_0^{(M)}$ then the map
$$\pi_1(T_0^{(M)},t) \ra \GL_n(\Prim[M]_{[v],t})$$
surjects onto $\SL^\pm_n(\Prim[M]_{[v],t})$. (Here $SL^\pm_n(\Prim[M]_{[v],t})$ denotes the group of automorphisms of $\Prim[M]_{[v],t}$ with determinant $\pm1$.) (We may, and shall, additionally assume that $C(n,N)>n$.)
\end{corollary}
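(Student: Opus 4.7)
The plan is to deduce this from the Zariski-density statement in part (7) of Proposition \ref{geom-lemma} via a standard strong-approximation argument, along the lines of the corresponding corollary in \cite{tbl-potmod}. Part (7) tells us that the image of $\pi_1(T_0,t)$ in $\GL_n(\Prim_{l,[v],t})$ is Zariski-dense in the semisimple algebraic group $\SL^{\pm}_n$. Since $T_0^{(M)} \to T_0$ is a finite \'etale cover, $\pi_1(T_0^{(M)},t)$ maps into $\pi_1(T_0,t)$ as a finite-index subgroup, and its image in $\SL^{\pm}_n(\Z_l)$ (or rather, in the appropriate $l$-adic monodromy group) is therefore still Zariski-dense.

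For each prime $p > C(n,N)$ dividing $M$ I would then invoke Nori's theorem (or equivalently the Matthews--Vaserstein--Weisfeiler strong-approximation theorem), which says that any Zariski-dense subgroup of $\SL^{\pm}_n(\Z_p)$ surjects mod $p$ onto $\SL^{\pm}_n(\F_p)$, provided $p$ exceeds a constant depending only on $n$. To promote this to surjection onto $\SL^{\pm}_n(\Z/p^{v_p(M)}\Z)$, the usual iterated lifting argument applies: for $p>n+1$ the kernel of $\SL_n(\Z/p^{k+1}\Z) \to \SL_n(\Z/p^k\Z)$ is identified via $I+p^kX \mapsto X$ with $\mathfrak{sl}_n(\F_p)$ carrying the adjoint action of $\SL_n(\F_p)$, which is irreducible in that range, so any subgroup mapping onto $\SL^{\pm}_n(\F_p)$ automatically surjects onto all higher levels.

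Finally, to combine the information at the various primes dividing $M$, I would apply Goursat's lemma: the Chinese Remainder decomposition identifies $\SL^{\pm}_n(\Z/M\Z)$ with $\prod_{p|M}\SL^{\pm}_n(\Z/p^{v_p(M)}\Z)$, and the nonabelian simple quotients $\PSL_n(\F_p)$ are pairwise non-isomorphic for distinct large primes $p$, which rules out any nontrivial diagonal subgroup containing the factorwise surjections. The main obstacle, such as it is, is simply bookkeeping: $C(n,N)$ must be chosen larger than the Nori/MVW bound for $\SL^{\pm}_n$, larger than $n+1$ so that the lifting works, and larger than any prime implicit in the cover $T_0^{(M)} \to T_0$ (this is where the dependence on $N$ enters, through the $N$-th roots of unity and level data built into the construction). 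With a $C(n,N)$ satisfying all three requirements, the corollary follows.
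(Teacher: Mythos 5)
Your proposal is correct and follows essentially the same route as the paper, which simply delegates to the strong-approximation argument of Corollary 4 of \cite{tbl-potmod} (equivalently Lemma 1.11 of \cite{hsbt}), citing part (7) of Proposition \ref{geom-lemma} together with Matthews--Vaserstein--Weisfeiler / Nori exactly as you do. The extra detail you supply (iterated lifting via irreducibility of the adjoint module, and Goursat to patch distinct primes) is what those cited proofs do internally, so there is no divergence of method; the only nitpick is that $\SL^\pm_n$ is not semisimple but merely has semisimple identity component, which is the form in which Nori/MVW are actually applied.
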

\begin{proof} The argument is identical to the proof of Corollary 4 of \cite{tbl-potmod} or Lemma 1.11 of \cite{hsbt}, deducing the result from part (7) of the previous proposition and from Theorem 7.5 and Lemma 8.4 of \cite{mvw} (or Theorem 5.1 of \cite{nori1987sgn}).
\end{proof}

\begin{lemma} \label{det-lemma} Define a character $G_{\Q(\mu_N)}\ra\Q_l^\times$:
$$\phi_l:=\Lambda_{v,W}\prod_i (\lambdacan_{G_{\Q(\mu_N)}}(\{\chi_i\},\{1\}))^{2} $$
(where $\Lambda_{v,W}$ is the Galois character defined in the main Theorem 5.3 of \cite{katz2007ald} and the $\chi_i$ are the maps $\mu_N\ra\mu_N$ naturally associated to the elements $v_i\in\Z/N/\Z$); we have that 
$$(\det \Prim_{l,[v],t=2})^2 = \phi_l^{2n} \eps_l^{n(1-n)}$$
\end{lemma}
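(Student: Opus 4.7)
The plan is to transport the proof of the analogous determinant lemma from \cite{tbl-potmod} to the new choice of $v$ described above, extracting $\det \Prim_{l,[v]}$ from Katz's main determinant formula (Theorem 5.3 of \cite{katz2007ald}) and then squaring at the end to absorb the sign ambiguity inherent in the odd-dimensional case.

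First I would invoke Katz's Theorem 5.3 to express $\det \Prim_{l,[v]}$, as a character of the fundamental group of $T$, as a product of the Galois character $\Lambda_{v,W}$ with an explicit combination of canonical characters $\lambdacan(\{\chi_i\},\{1\})$; this is exactly the input that went into the definition of $\phi_l$. Specializing at $t=2$ (a classical point of $T$), these canonical characters become honest Galois characters of $G_{\Q(\mu_N)}$, and $\det\Prim_{l,[v],t=2}$ becomes a character agreeing with $\phi_l^n$ up to a Tate twist coming from the $\eps_l$-factors that appear in the normalization of $\lambdacan$.

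Next I would pin down the exact Tate-twist discrepancy using the duality (part (\ref{geom-lemma-dual}) of Proposition \ref{geom-lemma}). Taking determinants in the isomorphism $\Prim_{l,[v],t}^c \cong \Prim_{l,[v],t}^\vee \eps_l^{2-N}$ yields
\[
(\det \Prim_{l,[v],t})^c \cdot \det \Prim_{l,[v],t} = \eps_l^{n(2-N)},
\]
which constrains the unknown Tate-twist factor. Combining this with Katz's formula determines $\det\Prim_{l,[v],t=2}$ up to a character whose square is $\eps_l^{n(1-n)}$. This is the source of the square on the left-hand side: for odd $n$, a half-Tate twist $\eps_l^{n(1-n)/2}$ is well defined but only up to a quadratic character, and squaring the asserted identity cleanly removes that ambiguity, yielding
\[
(\det \Prim_{l,[v],t=2})^2 = \phi_l^{2n}\eps_l^{n(1-n)}.
\]

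The main obstacle is a careful bookkeeping exercise: matching the factor of $2$ in the exponent of each $\lambdacan(\{\chi_i\},\{1\})$ appearing in the definition of $\phi_l$ against what Katz's formula actually outputs for our new $v$, and checking that the list of $\chi_i$ coming from the particular tuple $v$ selected (with its $n+1$ zeros and its omitted ranges around $N/2-1$ and $N-1$) produces the correct product. Since the proof in \cite{tbl-potmod} depended on $v$ only through the same combinatorial data Katz uses (number and multiplicities of coordinates mod $N$), the verification should be formally the same calculation as in the even case, with the sole novelty being the squaring step to resolve the odd-dimensional sign issue.
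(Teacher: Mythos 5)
Your overall plan — invoke Katz's Theorem 5.3 to express $\det\Prim_{l,[v]}$, specialize at $t=2$, and match characters — is the right approach, and you correctly observe at the end that the verification is formally the same as in the even case. Indeed, the paper's proof is literally ``exactly as for Lemma 7 of \cite{tbl-potmod}'': the computation from Katz's determinant formula does not care about the parity of $n$, only about the combinatorial data of $v$.

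However, your explanation of \emph{why} both sides are squared is incorrect. You claim that for odd $n$ the half-twist $\eps_l^{n(1-n)/2}$ ``is well defined but only up to a quadratic character,'' so that squaring is needed to remove an odd-dimensional sign ambiguity. This is false: $n(1-n)$ is even for every integer $n$ (one of $n$, $1-n$ is always even), so $n(1-n)/2$ is always an integer and $\eps_l^{n(1-n)/2}$ is a perfectly well-defined character with no ambiguity, regardless of parity. The squaring is not a new phenomenon introduced to handle odd $n$ — the even-dimensional analogue (Lemma 7 of \cite{tbl-potmod}) already has squares on both sides, inherited from the form of Katz's theorem and the squares appearing in the definition of $\phi_l$ itself. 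If anything, the odd case is \emph{simpler} with respect to signs: as noted in the proof of Proposition~\ref{geom-prop} in this paper, for odd $n$ one can always rescale a pairing by a non-square to flip the square class of its determinant, so the sign concerns that complicate the even case simply vanish. Your second step — using the duality $\Prim^c\cong\Prim^\vee\eps_l^{2-N}$ to ``constrain the unknown Tate-twist factor'' — is a legitimate consistency check, but it is not how the paper proceeds and would not by itself pin down the answer, since $\phi_l$ is not a Tate twist (indeed the subsequent Hodge–Tate computation shows $\phi_l$ has nontrivial, generally nonzero, Hodge–Tate weight $\vec{h}(\fl)$). The exponent $n(1-n)$ in the lemma comes out of the direct computation from Katz's formula, not from reverse-engineering via duality.
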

\begin{proof} Exactly as for Lemma 7 of \cite{tbl-potmod}.
\end{proof}

We can use this result to define a notation which will be useful to us in the remainder of this paper. Looking at the Hodge-Tate number of either side of the equation above at a prime $\fl$ over $l$, and writing $\mathrm{HT}_\fl(\phi_l)$ for the Hodge-Tate number of $\phi_l$ at that place, we get
\begin{align*}
2\times(\vec{h}(\fl)+(\vec{h}(\fl)+1)+\dots+(\vec{h}(\fl)+n-1)) &= 2n\,\mathrm{HT}_\fl(\phi_l) + n(n-1) \\
(2\vec{h}(\fl)+n-1)n &= 2n\,\mathrm{HT}_\fl(\phi_l)+ n(n-1) \\
(2\vec{h}(\fl))n &= 2n\,\mathrm{HT}_\fl(\phi_l)
\end{align*}
and we deduce that $\mathrm{HT}_\fl(\phi_l)=\vec{h}(\fl)$. Thus we can use twisting by $\phi_l$ to shift the Hodge-Tate numbers of an arbitrary representation by $\vec{h}$. 

\begin{definition}
Given an $l$-adic Galois representation $r$, we will write $r(-\vec{h})$ for the twist of $r$ by the character $\phi_l$ introduced above, and $r(\vec{h})$ for the twist by the inverse. (Thus, for example, the Hodge-Tate numbers of $r(\vec{h})$ are those of $r$ shifted by \emph{minus} $\vec{h}$.)
\end{definition}

We can finally deduce an analogue of Proposition 7 of \cite{tbl-potmod}. (We have slightly changed the notation.)

\begin{proposition} \label{geom-prop} The family $Y_t$ and the piece of its cohomology corresponding to $\Prim_{l,[v],t}$ have the following property:

Suppose $K'/K$ is a Galois extension of CM fields, with totally real subfields $K'^+, K^+$, $n$ is a positive odd integer, $l_1, l_2 \dots l_r$ are distinct primes which are unramified in $K$, and that we are given residual representations
$$\bar{\rho}_i:\Gal (\Kbar/K) \ra \GL_n(\F_{l_i}).$$
Suppose further that we are given $\fq_1, \fq_2, \dots, \fq_s$, distinct primes of $K$ above rational primes $q_1,\dots,q_s$ respectively, and $\cL$ a set of primes of $F$ not including the $\fq_j$ or any primes above the $l_i$. Suppose that each $q_j$ satisfies $q_j\notdiv N$. Finally, suppose that the following conditions are satisfied for each $i$:
\begin{enumerate}
\item \label{geom-prop-cdx-l-cn}
$l_i>C(n, N)$

\item \label{geom-prop-cdx-l-mod}
$l_i\equiv 1 $ mod $N$

\item \label{geom-prop-cdx-unram-at-cl}
$\bar{\rho}_i$ is unramified at each prime of $\cL$ and the $l_k$, $k\neq i$.

\item \label{geom-prop-cdx-inertia}
For each prime $\fw$ above $l_i$, we have that 
$$\bar{\rho}_i | _{I_\fw} \cong 1\oplus \epsilon_{l_i}^{-1}\oplus\dots\oplus  \epsilon_{l_i}^{1-n}$$

\item \label{geom-prop-cdx-dual}
We have that there exists a polarization $\bar{\rho}_i^c\cong \bar{\rho}_i^\vee \eps_{l_i}^{1-n}$; given this, we can associate to $\bar{\rho}_i$ a sign in the sense of Bellaiche-Chenevier and we require that this sign is +1. Finally, we require that $(\det\bar{\rho_i})^2\cong \eps_{l_i}^{n(1-n)}$
\end{enumerate}

Then we can find a CM field $F/K$, linearly disjoint from $K'/K$, a finite-order character $\chi_i:\Gal(\Qbar/F) \ra \Q_{l_i}$ for each $i$, and a $t\in F$ such that, 
\begin{enumerate}
\item \label{geom-prop-conclude-unram-and-split}
All primes of $K$ above the $\{l_i\}_{i=1,\dots,r}$ and all the $\cL$ are unramified in $F$
\item \label{geom-prop-conclude-good-reduction}
For all $i$, $Y_t$ has good reduction at each prime above lying above $l_i$, and each prime above the primes of $\cL$.
\item \label{geom-prop-conclude-crys-ht}
For all $i$ and $\fw|l_i$, $\Prim_{\fw,t}(\vec{h})\otimes\chi_i$ is crystalline with Hodge-Tate numbers $\{0,1,\dots,n-1\}$.
\item \label{geom-prop-conclude-steinberg}
For each $\fQ$ above some $\fq_j$, we have that $(\Prim_{l_i,[v],t})^{ss}$ and $\chi_i$ are unramified at $\fQ$, with $(\Prim_{l_i,[v],t}^{ss}(\vec{h})\otimes\chi)(\Frob_{\fQ})$ having eigenvalues $\{1, \#k(\fQ), \#k(\fQ)^2,\dots,\#k(\fQ)^{n-1}\}$.
\item \label{geom-prop-conclude-agree-mod}
$\Prim[l_i]_{[v],t}(\vec{h})\otimes\bar{\chi}_i = \bar{\rho}_i$ for all $i$.
\end{enumerate}
\end{proposition}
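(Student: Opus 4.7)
The plan is to carry over, essentially verbatim, the proof of Proposition 7 of \cite{tbl-potmod}, which in turn adapts the Moret-Bailly/Taylor template of \cite{hsbt}. All of the essential geometric inputs required by that argument --- the polarization (part (\ref{geom-lemma-dual})), the Hodge-Tate numbers and the shape of $\Prim_{l,[v]}|_{I_\fw}$ (parts (\ref{geom-lemma-ht})--(\ref{geom-lemma-inertia})), the Steinberg-like behavior at places where $t$ has negative valuation (part (\ref{geom-lemma-steinberg})), the determinant identity that makes the $\vec{h}$-shift work (Lemma \ref{det-lemma}), and the big monodromy statement (Corollary \ref{monodromy-mod-prop}) --- have already been re-established above for the new odd-dimensional choice of $v$, so only minor bookkeeping remains.

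I would proceed in three steps. First, for each $i$, I would choose an auxiliary finite-order character $\chi_i$ whose reduction $\bar{\chi}_i$ kills the discrepancy between $\det \bar{\rho}_i$ and $\det \Prim[l_i]_{[v],t}(\vec{h})$; this is possible because hypothesis (\ref{geom-prop-cdx-dual}) forces $(\det \bar{\rho}_i)^2 \cong \eps_{l_i}^{n(1-n)}$, and Lemma \ref{det-lemma} together with the Hodge-Tate computation shifts $\det\Prim_{l_i,[v],t}$ to satisfy the analogous relation after twisting by $\vec{h}$. The inertia calculation in part (\ref{geom-lemma-inertia}), after the $\vec{h}$-shift, gives precisely $1 \oplus \eps_{l_i}^{-1} \oplus \cdots \oplus \eps_{l_i}^{1-n}$ on $I_\fw$, matching $\bar{\rho}_i|_{I_\fw}$ from hypothesis (\ref{geom-prop-cdx-inertia}).

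Second, I would form the moduli space $S$ over $T_0^{(l_1\cdots l_r)}$ parametrizing tuples of polarization-preserving isomorphisms $\Prim[l_i]_{[v]}(\vec{h}) \otimes \bar{\chi}_i \cong \bar{\rho}_i$ for $i=1,\ldots,r$. Corollary \ref{monodromy-mod-prop}, combined with a Goursat-style argument exploiting the distinctness of the $l_i$, shows that the mod-$(l_1\cdots l_r)$ monodromy surjects onto $\prod_i \SL_n^{\pm}$; together with the sign hypothesis in (\ref{geom-prop-cdx-dual}), this ensures that $S$ has a geometrically irreducible component defined over a sufficiently large cyclotomic extension of $K$.

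Third, I would apply the Moret-Bailly argument (Proposition 2.1 of \cite{hsbt}) to this component, imposing the local conditions: $l_i$-adic proximity to $0$ at primes above each $l_i$ (to freeze the inertial shape and guarantee good reduction), $v_{\fq_j}(t) < 0$ at each $\fq_j$ (to trigger part (\ref{geom-lemma-steinberg}) and produce the prescribed Steinberg Frobenius eigenvalues after a further twist by $\chi_i$), good reduction at $\cL$ via part (\ref{geom-lemma-good-red}), and CM-ness plus linear disjointness from $K'/K$ of the field of definition $F$ of the resulting point. The main obstacle, identical to that in the even case of \cite{tbl-potmod}, is verifying that the component of $S$ selected by the sign condition is genuinely geometrically irreducible over a small enough cyclotomic base for Moret-Bailly to go through; this depends precisely on the monodromy group being $\SL_n^{\pm}$ rather than merely $\SL_n$, and since that computation is unchanged from the even case, the verification carries through without modification.
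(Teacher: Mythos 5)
Your proposal is correct and takes the same approach as the paper: reduce to Proposition 7 of \cite{tbl-potmod}, the odd-dimensional geometric inputs (Proposition \ref{geom-lemma}, Corollary \ref{monodromy-mod-prop}, Lemma \ref{det-lemma}) having already been re-established for the new choice of $v$. One refinement worth flagging: you describe the ``sign'' obstruction --- matching the pairing on $\bar{\rho}_i$ with that on $\Prim[l_i]_{[v],t}$ so as to pick out a geometrically irreducible component of $S$ --- as carrying over ``without modification'' from the even case, but the paper's point is that this concern actually \emph{disappears} when $n$ is odd. The reason is elementary: rescaling an $n$-dimensional pairing by a scalar $\lambda$ multiplies its determinant by $\lambda^n$, and for odd $n$ this has the same square class as $\lambda$, so one may freely adjust the discriminant of the pairing to lie in either square class. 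The compatibility of the two pairings is therefore automatic in the odd case, and the sign hypothesis in condition (\ref{geom-prop-cdx-dual}) does no real work here --- the argument is strictly simpler than in \cite{tbl-potmod}, not merely parallel to it.
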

\begin{proof} The proof is identical to that of Proposition 7 of \cite{tbl-potmod}, except that all concerns about the sign of the pairing determinant are eliminated, since for odd-dimensional representations one can turn a pairing with square determinant into one with non-square determinant, and vice-virca, by multiplying the whole pairing by a non-square.
\end{proof}

\section{Review of certain automorphy and potential automorphy theorems} \label{sec-review-powers}

As was mentioned in the introduction, the key difficulty in proving a potential automorphy theorem for odd-dimensional representations is finding a good starting point: a supply of of odd-dimensional representations known to be (at least potentially) modular. We will use two sources here. On the one hand we have the functoriality theorems for the symmetric square and fourth power of a two dimensional representation, as proved by Gelbert, Jacquet, Shahidi and Kim. (The symmetric square will be three dimensional and the fourth power will be five dimensional.) Let us review these theorems:

\begin{theorem}[Gelbert-Jacquet; part of Theorem (9.3) of \cite{gelbart1978rba}] Let $F$ be a number field, and $\pi$ be a unitary irreducible representation of $\GL_2(\mathbb{A}_F)$ which is automorphic cuspidal. Assume that for any character $\chi$ of $\A^\times/F^\times$, $\chi\neq1$, we have that $\pi$ and $\pi\otimes\chi$ are inequivalent. Then $\Sym^2\pi$ is automorphic cuspidal\footnote{$\Sym^2\pi$ may be defined by putting together local pieces---the local definition comes from local Langlands.}.
\end{theorem}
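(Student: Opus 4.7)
The plan is to produce $\Sym^2\pi$ as an irreducible admissible representation $\Pi$ of $\GL_3(\mathbb{A}_F)$ by defining it locally: at each unramified place $v$ we take $\Pi_v$ to be the unramified representation whose Satake parameters are the symmetric square of those of $\pi_v$, and at the remaining places we use the local Langlands correspondence together with the symmetric square map $\GL_2(\mathbb{C})\to\GL_3(\mathbb{C})$ of dual groups. Setting $\Pi=\otimes_v'\Pi_v$, the task is to show that this restricted tensor product is a \emph{global} automorphic representation, and then that it is cuspidal under the no-self-twist hypothesis.

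The automorphy will be deduced from the Converse Theorem of Jacquet-Piatetski-Shapiro-Shalika for $\GL_3$: it suffices to prove that for every idele class character $\chi$ of $\mathbb{A}^\times/F^\times$, the twisted $L$-function $L(s,\Pi\otimes\chi)$ is entire of finite order, bounded in vertical strips, and satisfies the expected functional equation relating $s$ to $1-s$ with the predicted local factors and conductor. The analytic input comes from Shimura's integral representation for the symmetric square, which expresses $L(s,\Sym^2\pi\otimes\chi)$ (up to finitely many local correction factors) as a Rankin-Selberg-type pairing of a cusp form in $\pi$ with a theta series attached to a binary quadratic form against a suitable Eisenstein series; meromorphic continuation and functional equation are then inherited from the theory of Eisenstein series, and cuspidality of $\pi$ is what rules out the residual pole that would otherwise appear.

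Cuspidality of $\Pi$ under the hypothesis that $\pi\not\cong\pi\otimes\chi$ for any nontrivial $\chi$ is then read off from the classification of isobaric automorphic representations of $\GL_3$: any decomposition of $\Pi$ as a nontrivial isobaric sum matching the prescribed Satake parameters at almost every place would force $\pi$ to admit a nontrivial self-twist, contradicting the assumption. I expect the hardest step to be proving holomorphy, the precise functional equation, and the correct ramified and archimedean local factors for the twisted $L$-functions \emph{uniformly in} $\chi$, particularly in the delicate case where $\chi$ is quadratic and close to being a self-twist of $\pi$; this verification is the technical heart of Gelbart-Jacquet and is also the obstruction that prevents the same strategy from directly lifting higher symmetric powers.
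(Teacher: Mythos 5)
The paper does not prove this theorem; it is imported as a black box from Gelbart--Jacquet \cite{gelbart1978rba} (the attribution ``part of Theorem (9.3)'' in the statement is the whole content of the paper's treatment), and its role here is only to supply the automorphic three-dimensional starting points used in Corollary \ref{autocorr}. So there is no internal proof to compare against. That said, your sketch is a faithful high-level account of the original Gelbart--Jacquet argument: the framework is indeed the $\GL_3$ converse theorem of Jacquet--Piatetski-Shapiro--Shalika with $\GL_1$ twists, the analytic input controlling $L(s,\Sym^2\pi\otimes\chi)$ is Shimura's Rankin--Selberg integral pairing a form in $\pi$ against a theta series and an Eisenstein series, and cuspidality reduces to the absence of a nontrivial self-twist of $\pi$. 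The one mild anachronism is your appeal to ``the local Langlands correspondence'' to define $\Pi_v$ at the ramified places: the full local correspondence for $\GL_2$ over $p$-adic fields was not available in 1978, and Gelbart--Jacquet instead construct the local lift case by case (principal series, special, supercuspidal via the Weil representation), with a substantial portion of their paper devoted to matching the local $L$- and $\epsilon$-factors coming out of the integral representation against those of the explicitly constructed local lift --- which is exactly the technical heart you correctly identified.
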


\begin{theorem}[Kim; Theorem B of \cite{kim-sym4}] Suppose $F$ is a number field, and $\pi$ is a cuspidal automorphic representation of $\GL_2(\A_F)$. Then $\Sym^4\pi$ is an automorphic representation\footnote{$\Sym^4\pi$ is defined by putting together local pieces---the local definition comes from local Langlands.} of $\GL_5(\A_F)$. If $\Sym^3(\pi)$ is cuspidal, then $\Sym^4(\pi)$ is either cuspidal or induced from cuspidal representations of $\GL_2(\A_F)$ and $\GL_3(\A_F)$.
\end{theorem}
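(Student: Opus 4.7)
The plan is to bootstrap from a known symmetric cube lifting and then extract the fourth symmetric power from an exterior square construction. The starting point I would use is the Kim--Shahidi functoriality for the symmetric cube, which upgrades a cuspidal automorphic representation $\pi$ of $\GL_2(\A_F)$ (with central character $\omega_\pi$) to an automorphic representation $\Sym^3\pi$ of $\GL_4(\A_F)$; I will treat this as input. The crucial representation-theoretic observation is the $\GL_2$-decomposition
\[
\wedge^2 \Sym^3 V \;\cong\; \Sym^4 V\otimes\det V \;\oplus\; (\det V)^3,
\]
which at the level of Langlands parameters says
\[
\wedge^2 \Sym^3 \pi \;\cong\; (\Sym^4 \pi \otimes \omega_\pi) \;\boxplus\; \omega_\pi^{3}.
\]
So it would suffice to establish (i) a functorial exterior square lift from $\GL_4$ to $\GL_6$, and (ii) identification of the isobaric pieces.

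For (i), I would follow the Langlands--Shahidi plus converse theorem strategy. The exterior square $L$-function $L(s,\Sigma,\wedge^2\otimes\tau)$ attached to a cuspidal representation $\Sigma$ of $\GL_4$ and cuspidal $\tau$ on smaller $\GL_m$ appears as a Rankin--Selberg factor in the constant term of Eisenstein series on a suitable $GSpin_{10}$-type group, with $\Sigma$ as inducing data on a Levi of type $\GL_4\times \GL_m$. I would use the Langlands--Shahidi machinery to deduce the standard analytic package (meromorphic continuation, functional equation, boundedness in vertical strips, absence of poles after a finite twist) for these $L$-functions, and then feed them into the Cogdell--Piatetski-Shapiro converse theorem for $\GL_6$ to obtain a weak lift $\wedge^2\Sigma$, taking $\Sigma = \Sym^3\pi$. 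To upgrade the weak lift to a strong one at ramified places I would use the stability of $\gamma$-factors under highly ramified twists to pin down local components.

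For (ii), given $\wedge^2 \Sym^3 \pi$ as an automorphic representation of $\GL_6$, I would invoke uniqueness of isobaric decomposition (Jacquet--Shalika strong multiplicity one). The summand $\omega_\pi^3$ is visible in the isobaric support because $\wedge^2\Sym^3\pi$ has a pole at $s=1$ after the appropriate twist by $\omega_\pi^{-3}$; removing that $1$-dimensional piece leaves an automorphic representation of $\GL_5$, which must then be $\Sym^4\pi\otimes\omega_\pi$, and untwisting by $\omega_\pi$ gives the desired lift. For the cuspidality dichotomy under the hypothesis that $\Sym^3\pi$ is cuspidal, I would write $\Sym^4\pi$ as an isobaric sum of cuspidals and, by computing $L$-functions $L(s,\Sym^4\pi\times\sigma)$ for each putative summand $\sigma$ and comparing poles with those predicted by the formula $\wedge^2\Sym^3\pi = \Sym^4\pi\otimes\omega_\pi \boxplus \omega_\pi^3$, systematically rule out partitions of $5$ other than $5=5$ and $5=2+3$, using the fact that the existence of a $1$-dimensional constituent in $\Sym^4\pi$ would force $\pi$ to have a solvable polyhedral symmetry incompatible with cuspidality of $\Sym^3\pi$.

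The main obstacle will be (i): constructing the exterior square lift for $\GL_4$. The Langlands--Shahidi analysis on $GSpin_{10}$ is delicate because one has to control normalizing factors of intertwining operators and verify that the resulting $L$-functions satisfy \emph{all} hypotheses of the converse theorem simultaneously (in particular the vertical boundedness), and the passage from weak lift to strong lift at the bad primes requires nontrivial work with local $\gamma$-factors. Once that package is in hand, the extraction of $\Sym^4\pi$ and the analysis of its cuspidality are comparatively formal consequences of isobaric uniqueness and the Kim--Shahidi cuspidality of $\Sym^3\pi$.
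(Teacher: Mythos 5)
The paper does not prove this statement; it cites it directly as Theorem~B of Kim's paper \cite{kim-sym4} and uses it as a black box. Your sketch is in fact a faithful outline of Kim's actual published argument: establish the exterior square lift $\wedge^2\colon\GL_4\to\GL_6$ via the Langlands--Shahidi method and a Cogdell--Piatetski-Shapiro converse theorem, apply it to $\Sigma=\Sym^3\pi$ (Kim--Shahidi), use the $\GL_2$-identity $\wedge^2\Sym^3 V\cong(\Sym^4 V\otimes\det)\oplus(\det)^3$ to isolate the degree-$5$ piece by isobaric uniqueness, and analyze cuspidality of $\Sym^4\pi$ by pole computations. So the proposal takes essentially the same route as the result being cited, and no discrepancy arises.
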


\begin{theorem}[Shahidi-Kim; Theorem 3.3.7 of \cite{kim2002csp}] Suppose $F$ is a number field, and $\pi$ is a cuspidal representation of $\GL_2(\A_F)$. Then $\Sym^4\pi$ is a cuspidal automorphic representation of $\GL_5(\A_F)$, unless:
\begin{itemize}
\item $\pi$ is monomial. (This is equivalent to $\pi$ being of dihedral type.)
\item $\pi$ is not monomial and $\Sym^3\pi$ is not cuspidal; this occurs when there exists a nontrivial gr\"ossencharacter $\mu$ such that $\Ad(\pi)\sim \Ad(\pi)\otimes\mu$. (This is equivalent to $\pi$ being of tetrahedral type.)
\item $\Sym^3\pi$ is cuspidal, but there exists a nontrivial quadratic character $\eta$ such that $\Sym^3(\pi)\sim \Sym^3(\pi)\otimes\eta$. (This is equivalent to $\pi$ being of octahedral type.)
\end{itemize}
\end{theorem}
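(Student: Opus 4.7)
The plan is to combine Kim's automorphy theorem (the immediately preceding statement) with an analysis of Rankin--Selberg $L$-functions to pin down precisely when $\Sym^4\pi$ fails to be cuspidal. By Jacquet--Shalika, $\Sym^4\pi$ is cuspidal if and only if $L(s,\Sym^4\pi\times\Sym^4\tilde\pi)$ has a simple pole at $s=1$; any extra pole signals a nontrivial isobaric summand, and identifying the precise sources of such extra poles will produce the trichotomy in the statement.

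The first step is to decompose the tensor product isobarically. Using the $\GL_2$ Clebsch--Gordan rule, one has
$$\Sym^4\pi \boxtimes \Sym^4\tilde\pi \;\sim\; \bigoplus_{k=0}^{4}\,(\Sym^{2k}\pi)\otimes\omega_\pi^{-k}.$$
Each summand is now automorphic---trivially for $k=0$, by Gelbart--Jacquet for $k=1$, by Kim--Shahidi via the symmetric cube for $k=2,3$, and by the immediately preceding theorem for $k=4$---so the total order of pole at $s=1$ equals one plus the number of indices $k\in\{1,2,3,4\}$ for which $\Sym^{2k}\pi$ contains $\omega_\pi^k$ as an isobaric summand. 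Thus $\Sym^4\pi$ is cuspidal if and only if no such $k\geq 1$ contributes.

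The next step is to classify when such extra components can appear, organized by the projective image of $\pi$. The finite subgroups of $\mathrm{PGL}_2(\mathbb{C})$ are cyclic, dihedral, tetrahedral ($A_4$), octahedral ($S_4$), and icosahedral ($A_5$), together with the case of infinite projective image. Direct calculation with $\Sym^{2k}$ of induced representations shows: a dihedral $\pi$ forces $\Sym^2\pi$ to contain a character; a tetrahedral $\pi$, defined by $\Ad\pi\cong\Ad\pi\otimes\mu$ for a nontrivial cubic character $\mu$, forces $\Sym^3\pi$ to fail cuspidality and induces an extra summand of $\Sym^4\pi$; and an octahedral $\pi$, defined by $\Sym^3\pi\cong\Sym^3\pi\otimes\eta$ for a quadratic $\eta$ with $\Sym^3\pi$ itself cuspidal, yields the $\GL_2\boxplus\GL_3$ splitting permitted by Kim's theorem. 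Each of these cases manifestly produces an extra pole in the $L$-function above.

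The principal obstacle is the converse direction: in the generic case---icosahedral or infinite projective image of $\pi$---ruling out any further self-twists of $\Sym^{2k}\pi$ for $1\leq k\leq 4$. This combines the local Langlands correspondence for $\GL_2$, translating self-twists into group-theoretic statements about the projective image, with a rigidity argument for icosahedral and `large' images showing that no nontrivial character $\chi$ can satisfy $\Sym^{2k}\pi\otimes\chi\cong\Sym^{2k}\pi$ in these cases. Once this is established the pole at $s=1$ is simple outside the three exceptional types, and $\Sym^4\pi$ is cuspidal, completing the theorem.
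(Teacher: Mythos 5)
The paper does not prove this statement: it is quoted verbatim as Theorem 3.3.7 of Kim--Shahidi \cite{kim2002csp}, so there is no ``paper's own proof'' to compare against. Evaluated as a sketch of how the theorem is established, your proposal contains a genuine gap.

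Your strategy hinges on the isobaric decomposition
$\Sym^4\pi \boxtimes \Sym^4\tilde\pi \sim \boxplus_{k=0}^{4}\,(\Sym^{2k}\pi)\otimes\omega_\pi^{-k}$
and on reading off the order of the pole of $L(s,\Sym^4\pi\times\Sym^4\tilde\pi)$ at $s=1$ summand by summand. For this to be meaningful you need each $\Sym^{2k}\pi$ to be a genuine automorphic representation of $\GL_{2k+1}(\A_F)$, so that the corresponding factor has controlled analytic behaviour at $s=1$. This is available for $k=0,1,2$ (trivially, by Gelbart--Jacquet, and by Kim's theorem). But for $k=3$ you need $\Sym^6\pi$ and for $k=4$ you need $\Sym^8\pi$, and the automorphy of neither is known in this generality --- these lay beyond the reach of the methods available at the time and are exactly the sort of thing \cite{harris:pao} and its successors were aiming at. Your attributions --- ``Kim--Shahidi via the symmetric cube'' for $k=3$ and ``the immediately preceding theorem'' for $k=4$ --- are incorrect: the symmetric-cube result concerns $\Sym^3$, not $\Sym^6$, and Kim's Theorem B concerns $\Sym^4$, not $\Sym^8$. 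Without these inputs the pole count does not go through, and the whole argument stalls.

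The actual route taken by Kim--Shahidi avoids this circularity. They start from Kim's structural dichotomy (when $\Sym^3\pi$ is cuspidal, $\Sym^4\pi$ is either cuspidal or of the form $\sigma\boxplus\tau$ with $\sigma$ on $\GL_2$ and $\tau$ on $\GL_3$), combine it with their own cuspidality criterion for $\Sym^3\pi$, and then analyze the low-rank Rankin--Selberg $L$-functions $L(s,\Sym^4\pi\times\tilde\pi)$, $L(s,\Sym^4\pi\times\Sym^2\tilde\pi)$, etc., which only involve $\GL_5\times\GL_m$ for $m\le 3$ and whose isobaric factorizations stay within the range of known functorial lifts. This lets them pin down $\sigma$ and $\tau$ precisely in the dihedral, tetrahedral and octahedral cases and rule out any splitting otherwise, without ever invoking $\Sym^6$ or $\Sym^8$. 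Your final paragraph (the ``rigidity argument'' for icosahedral and infinite projective image) is also left as a gesture rather than an argument, but the more serious problem is the reliance on automorphy of the high symmetric powers.
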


Since a representation corresponding to a Galois representation with distinct Hodge-Tate weights will never be dihedral, tetrahedral or octahedral, we may conclude:
\begin{corollary} \label{autocorr} Suppose $F$ is a CM or totally real field, and $\pi$ is a cuspidal automorphic representation of $\GL_2(\A_F)$ corresponding to an irreducible Galois representation $r$ with Hodge-Tate numbers $\{0,1\}$ (for example, the cohomology of an elliptic curve), and such that there exists some place $v_q$ of $F$ such that $\pi$ has type $\{Sp(1)\}_{\{v_q\}}$.

Then $\Sym^2 r$ and $\Sym^4 r$ are automorphic of type $\{Sp(1)\}_{\{v_q\}}$; that is, there are RAESDC representations, Steinberg at $v_q$, viz $\Sym^2\pi$ and $\Sym^4\pi$, to which the Galois representations $\Sym^2 r$ and $\Sym^4 r$ are attached by the procedure of Harris-Taylor.
\end{corollary}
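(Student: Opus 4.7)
The plan is to apply the Gelbart--Jacquet theorem and the Kim--Shahidi theorem quoted above to produce $\Sym^2\pi$ and $\Sym^4\pi$ as cuspidal automorphic representations of $\GL_3(\A_F)$ and $\GL_5(\A_F)$, and then to verify directly that they are RAESDC, Steinberg at $v_q$, and attached via the Harris--Taylor procedure to $\Sym^2 r$ and $\Sym^4 r$.

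The first step is to rule out the three exceptional cases that would obstruct those theorems. If $\pi$ were monomial, say $\pi \cong \Ind_K^F \chi$ for $K/F$ quadratic, then at every finite place the Weil--Deligne parameter of $\pi$ would have vanishing monodromy operator, contradicting the hypothesis that $\pi_{v_q}$ is Steinberg (whose parameter has regular nilpotent monodromy). This single observation rules out the dihedral case of Kim--Shahidi and simultaneously verifies the twist condition $\pi \not\cong \pi \otimes \chi$ required by Gelbart--Jacquet. The tetrahedral and octahedral cases both force $r$ to be an Artin (finite image) representation, whose Hodge--Tate weights would all equal $0$; this contradicts the hypothesis that those of $r$ are $\{0,1\}$.

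With the exceptional cases excluded, Gelbart--Jacquet yields the cuspidal automorphy of $\Sym^2\pi$ and Kim--Shahidi that of $\Sym^4\pi$. The remaining properties are checked as follows. Regularity and algebraicity are immediate, since the Hodge--Tate weights of $\Sym^k r$ are $\{0,1,\ldots,k\}$, hence distinct. Essential self-duality is automatic for a symmetric power of any cuspidal representation of $\GL_2$: from $\pi^\vee \cong \pi \otimes \omega_\pi^{-1}$ one obtains $(\Sym^k\pi)^\vee \cong \Sym^k\pi \otimes \omega_\pi^{-k}$. For the Steinberg property at $v_q$, applying $\Sym^k$ to the local Langlands parameter $\chi \cdot \mathrm{Sp}(2)$ of a twisted Steinberg in $\GL_2$ produces $\chi^k \cdot \mathrm{Sp}(k+1)$, the parameter of a twisted Steinberg in $\GL_{k+1}$. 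Finally, to identify the Harris--Taylor Galois representation of $\Sym^k\pi$ with $\Sym^k r$, one compares Frobenius eigenvalues at unramified primes: if $r(\Frob_v)$ has eigenvalues $\alpha,\beta$, then both sides have Frobenius eigenvalues $\{\alpha^{k-i}\beta^i\}_{i=0}^{k}$ at $v$, so Chebotarev density combined with Brauer--Nesbitt yields equality of semisimplifications and hence the required isomorphism.

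The only non-routine ingredient is the elimination of the three exceptional cases, and this is elementary given the Steinberg hypothesis and the distinctness of the Hodge--Tate weights; the rest is simply an assembly of the quoted functoriality theorems, standard facts about symmetric power lifts, and local Langlands for $\GL_2$.
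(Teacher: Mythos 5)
Your proposal is correct, and it is a little more careful than the paper's own argument on one point. The paper disposes of all three exceptional cases with the remark that ``a representation corresponding to a Galois representation with distinct Hodge--Tate weights will never be dihedral, tetrahedral or octahedral.'' This is accurate for the tetrahedral and octahedral cases, where the projective image is finite, so $r$ is a twist of an Artin representation by a character and its Hodge--Tate weights at each place coincide, contradicting $\{0,1\}$. But for the dihedral (= monomial) case the claim is not correct as stated: a CM elliptic curve yields an irreducible $r$ with Hodge--Tate weights $\{0,1\}$ that is induced from a Hecke character, so distinctness of Hodge--Tate weights does not by itself exclude the dihedral possibility. You instead rule out the dihedral case using the Steinberg hypothesis at $v_q$: an induced representation has trivial monodromy everywhere, whereas the Steinberg parameter has a regular nilpotent, and this observation simultaneously supplies the non-self-twist condition in Gelbart--Jacquet. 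This is the right argument, and it is exactly why the hypothesis on $\pi$ being Steinberg at $v_q$ is needed in the statement. The remainder of your proof (regularity and algebraicity via the Hodge--Tate weights of $\Sym^k r$, essential self-duality via $\pi^\vee\cong\pi\otimes\omega_\pi^{-1}$, Steinberg at $v_q$ via $\Sym^k$ of the local parameter $\chi\cdot\mathrm{Sp}(2)$, and the identification of Galois representations via Chebotarev and Brauer--Nesbitt) is the same routine verification the paper summarizes as ``the trivialities.'' Two small nitpicks: in the tetrahedral/octahedral case $r$ is a twist of an Artin representation, not literally Artin, though the conclusion about equal Hodge--Tate weights is unaffected; and when appealing to Kim's Theorem B you should be explicit that the case where $\Sym^4\pi$ is induced from $\GL_2\times\GL_3$ is excluded for the same reason as dihedral (it would again be incompatible with the Steinberg local component), so that the Kim--Shahidi cuspidality criterion is what actually does the work.
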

\begin{proof} Given the theorems above, we merely check the trivialities that $\pi$ being regular, being algebraic, being essentially-self-dual, and being Steinberg at $v_q$, imply the same for $\Sym^2\pi$ and $\Sym^4\pi$.
\end{proof} \medskip

This is our first source of `starting points' for proving odd-dimensional cases of potential automorphy. The second source is potential automorphy theorems for odd-dimensional symmetric powers of elliptic curves, as proved by Michael Harris.

\begin{theorem}[Harris; Theorem 4.4 of \cite{harris:pao}] \label{ha:thm} Let $n$ be an odd positive integer. Assume the expected theorems of \cite{harris:pao}. Let $F^{*,+}/F^+$ be an extension of totally real fields, $L$ a quadratic imaginary field, and $\cL$ a finite set of places of $F^+$. There is a field $M$ which depends only on $L$ and $n$ such that whenever $l>C'(n)$ is a rational prime unramified in $F^+$ and split in $M$, and $E$ an elliptic curve over $F^+$ with good reduction at $l$ and the primes of $\cL$, then there is a finite totally real Galois extension $F'^{,+}/F^+$, linearly disjoint from $F^{*,+}/F^+$ and unramified at primes of $\cL$, and a RAESDC automorphic representation $\pi$ of $\GL_n(F'^{,+})$, whose associated Galois representation is isomorphic to $(\Sym^n H^1(E,\Z_l))|_{\Gal(\Qbar/F'^{,+})}$. In other words, $\Sym^n H^1(E,\Z_l)|_{\Gal(\Qbar/F'^{,+})}$ is automorphic.

(Here $C'(n)$ is a constant depending only on $n$.)
\end{theorem}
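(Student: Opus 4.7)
The plan is straightforward: the statement is reproduced more or less verbatim from Theorem 4.4 of \cite{harris:pao}, and the ``proof'' here consists of little more than verifying that our formulation matches Harris's formulation (up to cosmetic differences in notation for the auxiliary data $F^{*,+}$, $L$, $\cL$, and the constant $C'(n)$) and then citing that theorem. No substantive mathematical content is added in this paper; the entire burden of proof rests with \cite{harris:pao}.

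For orientation, I would describe the shape of Harris's own argument as follows. It is a Taylor-style potential automorphy argument, structurally parallel to the one assembled in the rest of this paper, but deployed in the opposite direction: starting from an odd-dimensional representation that is known (potentially) automorphic by some auxiliary construction, and using a family with very flexible cohomology — again a Dwork-type family, with a cohomological piece selected much as we do in Section \ref{sec-dwork} so as to produce an $n$-dimensional Galois representation — to transfer automorphy to $\Sym^n H^1(E,\Z_l)$. Concretely, one picks an auxiliary prime $l'$ and an already-automorphic residual representation $\bar{\rho}'$ at $l'$; the flexible cohomology property is applied with $r=\Sym^n H^1(E,\Z_l)$ and $r'$ this auxiliary representation to produce a single member of the Dwork family whose mod $l$ cohomology matches $\Sym^n H^1(E,\F_l)$ and whose mod $l'$ cohomology matches $\bar{\rho}'$. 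Two applications of the lifting theorems of \cite{cht} then propagate automorphy from the auxiliary starting point to the cohomology of the family, and thence to $\Sym^n H^1(E,\Z_l)$ over a suitable totally real extension.

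The step that forces the invocation of the expected theorems of \cite{harris:pao}, and which would be the main obstacle in any attempt to reprove the result from scratch, is the construction of the odd-dimensional automorphic starting point $\bar{\rho}'$ with all the technical properties (Steinberg at a prescribed place, big image, essentially self-dual with Bellaiche--Chenevier sign $+1$, distinct regular Hodge-Tate weights, prescribed inertial behavior at $l'$) required to feed into the lifting theorems. The expected theorems are needed precisely to guarantee that such a starting point exists in the required generality. For the purposes of the present paper, however, the theorem is invoked strictly as a black box, so the verification reduces to a dictionary check between the two formulations.
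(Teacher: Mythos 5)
Your overall approach matches the paper's: Theorem~\ref{ha:thm} is cited, not reproved, and the only work is to reconcile the formulation here with the statement of Theorem 4.4 in \cite{harris:pao}. However, you mischaracterize that reconciliation as a ``dictionary check'' involving only ``cosmetic differences in notation.'' The paper is explicit, in the two remarks immediately following the theorem statement, that the auxiliary data $L$, $M$, $C'(n)$, $\cL$, and $F^{*,+}$ \emph{do not appear} in Harris's Theorem 4.4 at all. Recovering them requires opening the black box a crack: the conditions on $l$ (splitting in $M$, $l>C'(n)$) are standing hypotheses scattered across \S2.3, Corollary 2.5, and Lemma 3.2 of \cite{harris:pao} that must be collected and packaged; and the ability to impose ``unramified at $\cL$'' and ``linearly disjoint from $F^{*,+}$'' on the output field $F'^{,+}$ comes from the fact that $F'^{,+}$ is produced by an application of Harris's Theorem 2.1, which permits those extra constraints even though Theorem 4.4's statement does not advertise them. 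So the theorem as stated here is a genuine (if routine) strengthening of the literal statement of Theorem 4.4, justified by inspection of its proof rather than by notation-matching. Your sketch of the internal structure of Harris's argument is a reasonable orientation but is not part of the paper's justification and is not load-bearing; in particular, your suggestion that the expected theorems enter to construct the auxiliary automorphic starting point is a guess the paper neither makes nor needs, and in the present paper's own use of the expected theorems (Step E of the main proof) they serve instead to upgrade a modularity lifting theorem by removing a Steinberg hypothesis.
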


\begin{remark}
The field $L$ and the conditions on $l$ do not occur in the statement of Theorem 4.4 in \cite{harris:pao}, since they reflect ongoing assumptions throughout that paper. For the convenience of the reader, I will point out exactly where the assumptions on $l$ arise. In the `set up' in \S 2.3, we assume that $l$ splits in $L$ and a certain field $\Q(\chi)$ depending only on $L$, and assume that $l>2n+1$. Then, in applying Corollary 2.5, we additionally require that $l$ is larger than some $C(n)$ and that it split in a certain cyclotomic extension. Finally, in applying Lemma 3.2 we assume $l>4n-1$. These conditions can be combined, as we do above, by saying that $l$ exceeds some $C'(n)$ and splits in some $M$.
\end{remark}

\begin{remark} \label{remark-add-curlyl} The set $\cL$ of places doesn't appear in the statement of Theorem 4.4 of \cite{harris:pao}, but the field $F'^{,+}$ is chosen by an application of Theorem 2.1 of that paper; this theorem allows us to ensure that the field we choose does not ramify at some finite set of primes; the condition that $E$ have good reduction at the primes of $\cL$ means we can just add the primes of $\cL$ to that set.\footnote{We might also have to be careful in our choice of the character $\chi$, so that it is unramified at the primes of $\cL$, but this is also easily achieved.} Similarly, the field $F^{*,+}$ does not occur in the statement of Theorem 4.4, but Theorem 2.1 allows us to avoid any fixed field; see Harris' remarks immediately after the statement of Theorem 2.1.
\end{remark}

We are now in a position to address the main theorems.

\section{Arguments for the main theorems} \label{sec-proof-main-thm}

We will give a proof of both of our main theorems simultaneously; let us restate them in a convenient form to do so. (We have also relabelled the set $\cL$ of primes as $\cL'$; this will avoid confusion from a notation clash with the different $\cL$'s which arise in the statements of propositions and theorems which we will apply in the proof.)

\begin{unnumberedtheorem}[Restatement of Theorems \ref{main-theorem-1} and \ref{main-theorem-2}]
Suppose that $F/F_0$ is a Galois extension of CM fields and that $n$ is an odd integer. Suppose $N\geq n+6$ an even integer. Suppose that $l>C(n,N)$ is a prime which is unramified in $F$ and $l\equiv 1 \mod N$. Let $\cL'$ be a finite set of primes of $F$ not containing primes above $lq$.

Suppose that we are given a representation
$$r:\Gal (\Fbar/F) \ra \GL_n(\Z_l)$$
with the following properties: (we will write $\bar{r}$ for the semisimplification of the reduction of $r$)
\begin{enumerate}
\item $r$ ramifies only at finitely many primes.
\item $r^c \cong r^\vee\eps_l^{1-n}$, with sign +1
\item $r$ is unramified at all the primes of $\cL'$
\item $(\det \bar{r})^2\cong \eps_l^{n(1-n)}$ mod $l$
\item Let $r'$ denote the extension of $r$ to a continuous homomorphism $\Gal(\Fbar/F^+)\ra\cG_n(\Qbar_l)$ as described in section 1 of \cite{cht}; then $\bar{r}'(\Gal(\Fbar/F(\zeta_l))$ is `big' in the sense of `big image'. 
\item $\Fbar^{\ker\ad \bar{r}}$ does not contain $F(\zeta_l)$
\item For each prime $w|l$ of $F$, $r|_{\Gal(\Fbar_w/F_w)}$ is crystalline with Hodge-Tate numbers $\{0,1,\dots, n-1\}$. Also: 
$$\bar{r}|_{I_{F_w}} \cong 1 \oplus \eps_l^{-1} \oplus \dots \oplus  \eps_l^{1-n} $$
\end{enumerate}

Suppose further that \emph{one} of the following holds:
\begin{description}
\item[Case X] We have that $n=3$ or $n=5$. Moreover, there exists $v_q$, a prime of $F$ above a rational prime $q\neq l$ such that $q\notdiv N$, such that $(r|_{\Gal(\Fbar_{v_q}/F_{v_q})})\ssm $ and $\bar{r}|_{\Gal(\Fbar_{v_q}/F_{v_q})}$ are unramified and  $(r|_{\Gal(\Fbar_{v_q}/F_{v_q})}) \ssm$ has Frobenius eigenvalues $1, (\#k(v_q)), \dots, (\#k(v_q))^{n-1}$.
\item[Case Y] We admit the expected theorems of \cite{harris:pao}.
\end{description}

Then there is a CM field $F'$ which is Galois over $F_0$ and linearly independent from  $\Fbar^{\ker\ad \bar{r}}$ over $F$. Moreover, all primes of $\cL'$ and all primes of $F$ above $l$ are unramified in $F'$. Finally, there is a prime $w_q$ of $F'$ over $v_q$ such that $r|_{\Gal(\Fbar/F')}$ is automorphic of weight 0 and type $\{\Sp_n(1)\}_{\{w_q\}}$.
\end{unnumberedtheorem}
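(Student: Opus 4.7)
The plan is to follow the standard two-step modularity-lifting dance pioneered by Taylor, using Proposition \ref{geom-prop} to interpolate between $r$ and an already (potentially) automorphic representation via the cohomology of the Dwork family.

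First I would produce the bridge representation. Choose an auxiliary prime $l'\neq l$ (satisfying the hypotheses of Proposition \ref{geom-prop}: larger than $C(n,N)$, congruent to $1 \bmod N$, unramified in $F$). In Case X, use a Moret-Bailly-style argument to find, over some totally real extension of $F^+$, an elliptic curve $E$ having good reduction at $l'$ and at the relevant primes, multiplicative reduction at (a place above) $v_q$, and whose mod-$l'$ Galois representation is suitably controlled; Corollary \ref{autocorr} then yields that $\Sym^{n-1}H^1(E,\Z_{l'})$ is automorphic, cuspidal and of type $\{\mathrm{Sp}_n(1)\}$ at the place above $v_q$, giving us an automorphic Galois representation $r'$. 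In Case Y, pick $E$ in the same way (without the Steinberg condition) and invoke Theorem \ref{ha:thm} to obtain automorphy of $\Sym^{n-1}H^1(E,\Z_{l'})$ after possibly passing to a further totally real extension; again call the resulting representation $r'$.

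Next I would feed the pair $(\bar{r},\bar{r}')$ into Proposition \ref{geom-prop}, applied with $r=2$, primes $l_1=l$ and $l_2=l'$, and (in Case X) the Frobenius datum at $v_q$. This produces a CM field $F'/F$ linearly disjoint from $\Fbar^{\ker\ad\bar{r}}$, finite-order characters $\chi_i$, and a point $t\in F'$ such that $Y_t$ has good reduction at all the relevant primes, and such that after twisting by $\chi_i$ and shifting by $\vec{h}$, $\Prim_{l,[v],t}(\vec{h})\otimes\chi_1$ agrees with $\bar{r}|_{\Gal(\Fbar/F')}$ mod $l$ while $\Prim_{l',[v],t}(\vec{h})\otimes\chi_2$ agrees with $\bar{r}'|_{\Gal(\Fbar/F')}$ mod $l'$. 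In Case X, part (\ref{geom-prop-conclude-steinberg}) of the proposition arranges that the cohomology carries Steinberg-type eigenvalues above $v_q$, matching those of $r$.

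Finally I would apply the Clozel-Harris-Taylor modularity lifting theorem (in the form used throughout \cite{tbl-potmod}) twice. The first application, using the mod $l'$ equality with $\bar{r}'|_{\Gal(\Fbar/F')}$, upgrades the automorphy of $r'|_{\Gal(\Fbar/F')}$ to the automorphy of $\Prim_{l',[v],t}(\vec{h})\otimes\chi_2$. Since the cohomology of $Y_t$ for varying $\ell$ forms a compatible system of $\ell$-adic representations, $\Prim_{l,[v],t}(\vec{h})\otimes\chi_1$ is then automorphic as well. The second application of the lifting theorem, now using the mod $l$ equality with $\bar{r}|_{\Gal(\Fbar/F')}$ and the crystalline/Hodge-Tate/big-image hypotheses (5)-(7) of the theorem, transfers automorphy from this cohomology piece to $r|_{\Gal(\Fbar/F')}$ itself, with the Steinberg type at a place $w_q$ above $v_q$ surviving the whole process in Case X.

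The main obstacle is purely bookkeeping: one must check that each application of the lifting theorem has all hypotheses simultaneously satisfied, in particular that the ``big image'' condition on $\bar{r}'$ transports through the twist-and-shift by $\chi_i$ and $\vec{h}$ to the relevant reduction of the Dwork cohomology, and that the Hodge-Tate, crystalline and Steinberg conditions are preserved at every stage. This is exactly the verification carried out in section 3 of \cite{tbl-potmod} for even $n$, and essentially all the work is to observe that the analogous checks go through unchanged in the odd-dimensional case; the only genuinely new input is the odd-dimensional supply of starting representations assembled in Section \ref{sec-review-powers}.
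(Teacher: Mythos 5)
Your proposal follows the same overall strategy as the paper: produce a supply of automorphic odd-dimensional representations from symmetric powers of an elliptic curve, interpolate against $r$ via the Dwork family (Proposition~\ref{geom-prop}), then apply the CHT-style lifting theorem twice. The skeleton and the key ingredients all match. Two small remarks on points where the paper is more careful than you.

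First, in Case X you go directly from the Moret--Bailly-produced elliptic curve $E$ to Corollary~\ref{autocorr}, but Corollary~\ref{autocorr} takes as \emph{input} a cuspidal automorphic representation $\pi$ of $\GL_2(\A_F)$ attached to $H^1(E)$. Producing $E$ (even with the right reduction types and controlled mod-$l'$ image) does not by itself give you that automorphy over the base field you will ultimately work over. The paper resolves this by choosing $E$ over $\Q$ (so $E$ is modular) and then still applying the $n=2$ case of Theorem~3.2 of \cite{hsbt} to $H^1(E)$; that application is what produces the auxiliary totally real field $F_1^+$ over which $H^1(E)$ is automorphic \emph{and} which is linearly disjoint from $\Fbar^{\ker\ad\bar{r}\cap\ker\ad\bar{r}_E}$ and unramified at $\cL'$ and $l$. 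Without that intermediate step you cannot yet invoke Corollary~\ref{autocorr}, and you also lose the control over the field that the big-image checks in Steps E and F of the paper rely on.

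Second, the ordering: you apply Proposition~\ref{geom-prop} after establishing the (potential) automorphy of the symmetric power, which means the field $F'$ it outputs need not be compatible with the field over which the symmetric power is known automorphic; one then has to re-do the field juggling. The paper avoids this by first applying Proposition~\ref{geom-prop} (over $F$, producing $F_0$), and only then running the potential-automorphy step (over $F_0^+$, producing $F_1^+$), taking $F_1 = F_0 F_1^+$ as the final field. Both orderings can be made to work, but the paper's is cleaner and you would need to patch the field disjointness/unramifiedness claims if you keep your order. These are bookkeeping issues rather than conceptual ones; the underlying method is identical.
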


\begin{proof} \emph{Step A:} We begin by choosing a quadratic imaginary field $L$ linearly disjoint from $F$ over $\Q$. Let $M$ be the field given in Theorem \ref{ha:thm}. We then choose a prime $l'$ with the following properties.
\begin{itemize}
\item $l'$ is unramified in $F$. \hfill(A1)
\item $l'>C(2)$ (this is the constant $C(n)$ defined in Theorem 3.2 of \cite{hsbt}) \hfill(A2x)
\item $l'>C'(n)$ (the constant from Theorem \ref{ha:thm}) \hfill(A2y)
\item $l'>C(n,N)$ (the constant from Proposition \ref{geom-prop}) \hfill(A3)
\item $l'$ splits in $\Q(\zeta_N)$ and in $M$ \hfill(A4)
\item $r$ is unramified at $l'$\hfill(A5)
\item $l'\neq l$ and $l'\neq q$
\end{itemize}
which is clearly possible.

\smallskip\emph{Step B:} Choose an elliptic curve $E$ over $\Q$ with the following properties:
\begin{itemize}
\item $E$ has good ordinary reduction at $l'$, with $H^1(E\times\Qbar,\Z_{l'})$ semisimple (or in other words, tamely ramified) \hfill(B1)
\item $E$ has good ordinary reduction at $l$ and the primes of $\cL'$ \hfill(B2)
\item \emph{(If we are in case {\bf X})} $E$ has multiplicative reduction at $q$. \hfill(B3)
\item The Galois representation coming from the cohomology $H^1(E\times\Qbar,\Z_{l'})$ is surjective.\hfill(B4)
\end{itemize}
It is possible to do this using the form of Hilbert irreducibility with weak approximation (see \cite{ekedahl1988evh}). We can impose the condition over $q$ by insisting that the $j$ invariant satisfies $v_q(j)<0$; we can impose the condition at the prime $l'$ by taking the Serre-Tate canonical lift of an ordinary elliptic curve.

We will write $r_E$ for the $n$ dimensional Galois representation given by $\Sym^{n-1}H^1(E\times \Qbar,\Z_{l'})$.

\smallskip\smallskip\emph{Step C:} We now apply Proposition \ref{geom-prop} with $K=F$, $r=2$, $l_1=l$, $l_2=l'$, $K'=\Fbar^{\ker \ad \bar{r}\cap\ker \ad \bar{r}_E}$, $\bar{\rho_1}=r$ mod $l$, $\bar{\rho_2}=r_E|_{G_F}$ mod $l'$, and $\cL=\cL'$. In case {\bf X} we take $s=1$ and $\fq_1=v_q$; in case {\bf Y} we take $s=0$. Let us verify the conditions of this proposition in turn:
\begin{enumerate}
\item \emph{The $l_i$ are large enough.} For $l$, this is a hypothesis of the theorem. For $l'$, it is point A3 above.
\item \emph{The $l_i$ split in $\Q(\zeta_N)$.} Again, for $l$ this is a hypothesis of the theorem; for $l'$ it is point A4 above.
\item \emph{The $\bar{\rho}_i$ are unramified at the primes of $\cL'$ and the $l_i$.} This follows from (B2), hypothesis (vi), and point (A5). 
\item \emph{The $\bar{\rho}_i$ have the right restriction to inertia.} For $\bar{\rho_1}$ this is hypothesis (vii) of our theorem; for $\bar{\rho_2}$ it follows from point (B1) above.
\item \emph{Essentially self dual with correct determinant and sign.}
  For $\bar{\rho_1}$ this is hypotheses (i) and (ii) of the theorem being proved; for $\bar{\rho_2}$ it follows from the fact that $r_E$ is symplectic with multiplier $\eps_{l'}^{1-n}$ and $r_E=r_E^c$.
\end{enumerate}
This constructs a CM field $F_0/F$, characters $\chi_1:G_F\ra Q_{l}$ and $\chi_2:G_F\ra Q_{l'}$, and a $t\in F_0$ such that:
\begin{itemize}
\item All primes of $F$ above $l,l'$ and all the primes of $\cL'$ are unramified in the field $F_0$ \hfill(C1)
\item $Y_t$ has good reduction at all primes of $F_0$ above $l,l'$ and all the primes of $\cL'$\hfill(C2)
\item For all $\fw|ll'$, $\Prim_{\fw,t}(\vec{h})\otimes\chi_i$ is crystalline with Hodge-Tate numbers $\{0,1,\dots,n-1\}$\hfill(C3)
\item \emph{(If we are in case {\bf X})} For each $\fQ$ above $v_q$, we have that $(\Prim_{l_i,[v],t})^{ss}$ and $\chi_i$ are unramified at $\fQ$, with $(\Prim_{l_i,[v],t}^{ss}(\vec{h})\otimes\chi)(\Frob_{\fQ})$ having eigenvalues $\{1, \#k(\fQ), \#k(\fQ)^2,\dots,\#k(\fQ)^{n-1}\}$ \hfill(C4)
\item We have that $\Prim[l]_{[v],t}(\vec{h})\otimes\bar{\chi}_i = r|_{G_{F_0}}\text{ mod $l$}$\hfill(C5)
\item We have that $\Prim[l']_{[v],t}(\vec{h})\otimes\bar{\chi}_i = r_E|_{G_{F_0}}\text{ mod $l'$}$\hfill(C6)
\item $F_0$ is linearly disjoint from $\Fbar^{\ker \ad \bar{r}\cap\ker \ad \bar{r}_E}$ over $F$.\hfill(C7)
\end{itemize}

\smallskip\emph{Step D:} The objective of this step is to find a totally real Galois field extension $F_1^+/F^+_0$, with the following properties:
\begin{itemize}
\item $r_E|_{G_{F_1^+}}$ is automorphic. (In case {\bf X}, automorphic of type $\{\Sp_n(1)\}_{\{w|v_q\}}$.) \hfill(D1)
\item None of the primes of $\cL'$ ramify in $F_1$\hfill(D2)
\item $l$ does not ramify in $F_1$\hfill(D3)
\item $F_1^+$ is linearly disjoint from $(\Fbar^{\ker \ad \bar{r}\cap\ker \ad \bar{r}_E})^+$ over $F^+$\hfill(D4)
\end{itemize}
We will use slightly different arguments in case {\bf X} and case {\bf Y}.
\begin{description}
\item[Case X] We apply Theorem \ref{ha:thm}, taking $\cL=\cL'\cup\{l\}$, $F^{*,+}=(\Fbar^{\ker \ad \bar{r}\cap\ker \ad \bar{r}_E})^+$ and with the $l$ in the theorem being our $l'$. We observe that $l'$ is split in $M$ (by condition A4), and that $E$ has good reduction at $l,l'$ and the primes of $\cL'$ (by conditions B1, B2 above), that $l'$ is large enough (by point A2x), and finally that the primes of $\cL'$, $l$ and $l'$ do not ramify in $F_1^+$ by conditions C1, A1 above. Thus we meet the conditions of the theorem, which immediately gives us what we want.
\item[Case Y] We apply Theorem 3.2 of \cite{hsbt}, \emph{in the case n=2}, to the \emph{two-dimensional} Galois representation $H^1(E\times\Qbar,\Z_{l'})$. In fact, we will apply a modified version of the theorem with a collection $\cL$ of primes which may be chosen not to ramify in the field extension the theorem will produce, and where we are allowed to assume this extension is linearly disjoint from any other fixed extension $F^{*,+}$; the requisite arguments are just as in Remark \ref{remark-add-curlyl} above. We will take $\cL=\cL'\cup\{l\}$, $F^{*,+}=(\Fbar^{\ker \ad \bar{r}\cap\ker \ad \bar{r}_E})^+$, and the $l$ in the theorem is our $l'$. 

It is immediate that the representation $H^1(E\times\Qbar,\Z_{l'})$ is odd and that it has Hodge-Tate numbers $\{0,1\}$. It is also surjective (by condition B4) and Steinberg at primes above $q$ (by condition B3). Finally note that the primes of $\cL'$, $l$ and $l'$ do not ramify in $F_1^+$ by conditions C1, A1 above and $l'$ is large enough to apply the theorem by condition A2y. Whence we satisfy the conditions of the theorem, and we can construct a field $F_1^+/F^+_0$ with $H^1(E\times\Qbar,\Z_{l'})|_{G_{F_1^+}}$ automorphic of type $\{\Sp_n(1)\}_{\{w|v_q\}}$. Applying Corollary \ref{autocorr}, and the fact that $n=3$ or $n=5$, we deduce that $r_E|_{G_{F_1^+}}$ (=$\Sym^{n-1}H^1(E\times\Qbar,\Z_{l'})|_{G_{F_1^+}}$) is automorphic of type $\{\Sp_n(1)\}_{\{w|v_q\}}$.
\end{description}

\smallskip\emph{Step E:} Define $F_1=F_0F_1^+$, a CM field with totally real subfield $F_1^+$. We now apply a modularity lifting theorem to deduce that the Galois representation $(\Prim_{l',[v],t}(\vec{h})\otimes\chi_1)|_{G_{F_1}}$ is automorphic (in case {\bf X}, automorphic of type $\{\Sp_n(1)\}_{\{w|v_q\}}$.) In case {\bf X}, the theorem we apply is Theorem 5.2 of \cite{taylor2006asa}; in case {\bf Y} we apply the strengthening of that theorem made possible by admitting the expected theorems of \cite{harris:pao} and given as Theorem 1.7 of that paper, in which the Steinberg condition is removed. Let us check the conditions of these theorems (following the numbering in \cite{taylor2006asa}) in turn:
\begin{enumerate}
\item \emph{$\Prim_{l',[v],t}(\vec{h})\otimes\chi_1|_{G_{F_1}}$ is conjugate self dual.} This is immediate from Proposition \ref{geom-lemma}, part 2.
\item \emph{$\Prim_{l',[v],t}(\vec{h})\otimes\chi_1|_{G_{F_1}}$ is unramified a.e.} This is trivial.
\item \emph{$\Prim_{l',[v],t}(\vec{h})\otimes\chi_1|_{G_{F_1}}$ is crystalline at all places above $l'$.} This is from point (C3) above.
\item \emph{$\Prim_{l',[v],t}(\vec{h})\otimes\chi_1|_{G_{F_1}}$ has the right Hodge-Tate numbers all places above $l'$.}  This is true for the same reason as the previous point.
\item \emph{$\Prim_{l',[v],t}(\vec{h})\otimes\chi_1|_{G_{F_1}}$ is Steinberg at places above $v_q$. (This condition only present in case {\bf X}.)} This is point (C4) above.
\item \emph{$\Prim_{l',[v],t}(\vec{h})\otimes\chi_1|_{G_{F_1}}$ has big image.} By point (C6), it suffices to show that $\bar{r}_E|_{G_{F_1}}$ has big image; by points (C7) and (D4) it then suffices to show $\bar{r}_E$ has big image. For this we use the simplicity of $\PSL_2(\F_l)$ for $l>3$, Corollary 2.5.4 of \cite{cht}, and point (B4) above.
\item \emph{Let $M'=\ker\ad\, \Prim[l']_{[v],t}(\vec{h})\otimes\chi_1|_{G_{F_1}}$; then $\Fbar^{M'}$ does not contain $F(\zeta_l)$.} Same argument as previous point. 
\item \emph{$\Prim_{l',[v],t}(\vec{h})\otimes\chi_1|_{G_{F_1}}$ is residually automorphic.} (In case {\bf X} we must additionally have `automorphic of type $\{\Sp_n(1)\}_{\{w|v_q\}}$'.) By point (D1) above, we have that $r_E|_{G_{F_1^+}}$ is automorphic; by abelian base change, we therefore have that $r_E|_{G_{F_1}}$ is automorphic. By point (C6) above, this gives us what we need.
\end{enumerate}

\smallskip\emph{Step F:} We now apply a modularity lifting theorem to deduce that $r|_{\Gal(\bar{F}/F_1)}$ is automorphic (in case {\bf X}, automorphic of type $\{\Sp_n(1)\}_{\{w|v_q\}}$.) Again we use Theorem 5.2 of \cite{taylor2006asa} in case {\bf X} and Theorem 2.7 of \cite{harris:pao} otherwise. Let us check the conditions:
\begin{enumerate}
\item \emph{$r|_{G_{F_1}}$ is conjugate self dual.} This is condition (ii) of the theorem currently being proved.
\item \emph{$r|_{G_{F_1}}$ is unramified a.e.} This is condition (i) of the theorem currently being proved.
\item \emph{$r|_{G_{F_1}}$ is crystalline at all places above $l$.} This is by condition (vii) of the theorem currently being proved.
\item \emph{$r|_{G_{F_1}}$ has the right Hodge-Tate numbers all places above $l$.}  This is also by condition (vii) of the theorem currently being proved.
\item \emph{$r|_{G_{F_1}}$ is Steinberg at places above $v_q$. (This condition only present in case {\bf X}.)} This is one of the hypotheses in case {\bf X}.
\item \emph{$\bar{r}|_{G_{F_1}}$ has big image.} Condition (v) of the theorem currently being proved gives that this is true before restriction to $G_{F_1}$; then points (C7) and (D4) show it remains true after this restriction.
\item \emph{$\Fbar^{\ker\ad \,\bar{r}|_{G_{F_1}}}$ does not contain $F(\zeta_l)$.} This is condition (vi) of the theorem currently being proved.
\item \emph{$r|_{G_{F_1}}$ is residually automorphic. (In case {\bf X} we must additionally have `automorphic of type $\{\Sp_n(1)\}_{\{w|v_q\}}$'.)} By point (C5) above, $\Prim[l]_{[v],t}(\vec{h})\otimes\chi_1=\bar{r}$, so certainly $\Prim[l]_{[v],t}(\vec{h})|_{G_{F_1}}\otimes\chi_1=\bar{r}|_{G_{F_1}}$ and it suffices to show $\Prim_{l,[v],t}(\vec{h})|_{G_{F_1}}$ is automorphic. For this, note that we concluded in step E that $(\Prim_{l',[v],t}(\vec{h})\otimes\chi_1)|_{G_{F_1}}$ is automorphic. Additionally, we know that $Y_t$ has good reduction at $l,l'$, by point (C2) above. Thus we can deduce that $(\Prim_{l,[v],t}(\vec{h})\otimes\chi_1)|_{G_{F_1}}$ is also automorphic.
\end{enumerate}
This concludes the proof of the theorem, since we can take $F'=F_2$. (Note that the primes of $\cL'$ are unramified in $F_2$ by condition D2, and the primes above $l$ are unramified by condition D3.)
\end{proof}

\appendix
\section{Some simple analysis}\label{anal-appendix}
The purpose of this section is to do some very simple analysis to allow the deduction of Corollaries \ref{cor-main-theorem-1} and \ref{cor-main-theorem-2} from our main theorems. It is trivial that it suffices to prove the following statement:
\begin{proposition} Fix an integer $n$. Let $\Lambda$ be a set of rational primes such that for all even $N>n$ there exists a constant $C(N)$ such that all primes $l$ congruent to 1 mod $N$ and larger than $C(N)$ lie in $\Lambda$. Then $\Lambda$ has Dirichlet density 1.
\end{proposition}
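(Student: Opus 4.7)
The plan is to show directly that the complement $\Lambda^c$ has Dirichlet density zero, from which the claim follows. The key observation is that the hypothesis gives us, for every even $N > n$, an inclusion
$$\Lambda^c \;\subseteq\; \{l : l \leq C(N)\} \;\cup\; \{l : l \not\equiv 1 \pmod{N}\}.$$
The first set on the right is finite and so contributes $0$ to any reasonable notion of density, while by Dirichlet's theorem on primes in arithmetic progressions, the set of primes not congruent to $1$ modulo $N$ has Dirichlet density $1 - 1/\phi(N)$.

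Consequently, the upper Dirichlet density of $\Lambda^c$ satisfies
$$\overline{\delta}(\Lambda^c) \;\leq\; 1 - \tfrac{1}{\phi(N)}$$
for every even integer $N > n$. It remains only to observe that $\phi(N)$ can be made arbitrarily large among even $N>n$; for instance, taking $N = 2p$ for a large odd prime $p$ gives $\phi(N) = p - 1 \to \infty$. Letting $p \to \infty$ forces $\overline{\delta}(\Lambda^c) = 0$, so $\Lambda^c$ has Dirichlet density $0$ and $\Lambda$ has Dirichlet density $1$.

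There is essentially no obstacle: the argument is a one-line application of Dirichlet's theorem, together with the observation that $\phi$ is unbounded on the even integers. The only mild point of care is to use \emph{upper} Dirichlet density, since we do not know a priori that $\Lambda^c$ has a density, but this costs nothing.
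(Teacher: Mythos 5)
There is a genuine error in your final step, and it's fatal to the argument as written. You correctly derive the inclusion $\Lambda^c \subseteq \{l \leq C(N)\} \cup \{l \not\equiv 1 \pmod{N}\}$ and the resulting bound $\overline{\delta}(\Lambda^c) \leq 1 - 1/\phi(N)$ for each admissible $N$. But then you claim that sending $\phi(N)\to\infty$ forces $\overline{\delta}(\Lambda^c)=0$. This is exactly backwards: as $\phi(N)\to\infty$, the upper bound $1 - 1/\phi(N)$ tends to $1$, so your family of inequalities collapses to the trivial statement $\overline{\delta}(\Lambda^c)\le 1$. To conclude $\overline{\delta}(\Lambda^c)=0$ you would need bounds tending to $0$, and no single $N$ can supply one: for any fixed $N>2$, the residue class $1\bmod N$ has density only $1/\phi(N)\le 1/2$, so a single congruence condition can never cover a density-one set of primes.

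The missing idea is to use \emph{many} moduli simultaneously. The paper's proof fixes $\epsilon>0$ and chooses a finite list of even integers $N_1,\dots,N_k>n$ (e.g.\ $N_i=2p_i$ for an increasing list of primes $p_i>n$); by the Chinese Remainder Theorem and Dirichlet, the set of primes $\equiv 1 \pmod{N_i}$ for \emph{at least one} $i$ has density $1-\prod_i\bigl(1-1/\phi(N_i)\bigr)$, and since $\prod_p(1-1/p)$ diverges to $0$, this can be made $>1-\epsilon$. Each congruence class contributes to $\Lambda$ away from the finite exceptional set $\{l\le C(N_i)\}$, so $\Lambda$ contains a set of lower density $>1-\epsilon$, and letting $\epsilon\to 0$ gives density $1$. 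Your framework (bounding $\overline{\delta}(\Lambda^c)$ rather than $\underline{\delta}(\Lambda)$) could be salvaged by intersecting, over $i=1,\dots,k$, the sets $\{l : l\not\equiv 1\pmod{N_i}\}$ instead of working with a single $N$; that intersection has density $\prod_i(1-1/\phi(N_i))$, which \emph{can} be driven to $0$. But as it stands, the one-modulus argument does not prove the claim.
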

\begin{proof} Let $\epsilon>0$ be given. Then we can find a finite list of even integers $N_1,\dots,N_k$ each of which is $>n$ such that the set of primes congruent to 1 mod at least one $N_i$ has Dirichlet density $>1-\eps$. (For instance we could take the $N_i$ to simply be twice an increasing list of consecutive primes above $n$; then the fact that $\prod(1-(1/p))$ diverges to 0 gives us what we want.)

Then, writing $D^+(S)$ (resp $D^-(S)$) for the upper (resp lower) Dirichlet density of a set of primes $S$, $D(S)$ for the Dirichlet density if it exists, $S_1$ for the set of primes congruent to 1 mod at least one $N_i$ and $S_0$ for the set of primes congruent to 1 mod at least one $N_i$ and larger than the maximum of the $C(N_i)$, we have
\begin{align*}
1 &\geq D^+(\Lambda)\geq D^-(\Lambda) \\
&=D^-(S_0)&&\text{(since $S_0\subset\Lambda$)}\\
&=D^-(S_1)&&\text{(finite sets do not affect density)}\\
&=D(S_1) &&\text{(since $S_1$ has a density)}\\
&> 1-\eps
\end{align*} 
and since $\eps$ was arbitrary, we have that 
$1 \geq D^+(\Lambda)\geq D^-(\Lambda)\geq1$
whence we are done.
\end{proof}

\nocite{cht, deligne1989hcm, hsbt, harris:pao, katz2007ald, katz1990esde, gelbart1978rba, taylor2006asa, kim-sym4, kim2002csp}

\bibliographystyle{alpha} 
\bibliography{tbl-bib}

\begin{thebibliography}{HSBT06}

\bibitem[BL08]{tbl-potmod}
T.~Barnet-Lamb.
\newblock {Potential automorphy for certain Galois representations to
  $GL(2n)$}.
\newblock {\em preprint}, 2008.

\bibitem[CHT05]{cht}
L.~Clozel, M.~Harris, and R.~Taylor.
\newblock {Automorphy for some $l$-adic lifts of automorphic mod $l$ Galois
  representations.}
\newblock {\em preprint}, 2005.

\bibitem[Eke90]{ekedahl1988evh}
T.~Ekedahl.
\newblock {\em {An effective version of Hilbert's irreducibility theorem, in
  ``S\'eminaire de Theorie des Nombres, Paris, 1988-1989, Prog. in Math.
  91''}}, pages 241--249.
\newblock Birkh\"auser, 1990.

\bibitem[GJ78]{gelbart1978rba}
S.~Gelbart and H.~Jacquet.
\newblock {A relation between automorphic representations of $\GL(2)$ and
  $\GL(3)$}.
\newblock {\em Ann. Sci. Ecole Norm. Sup.(4)}, 11(4):471--542, 1978.

\bibitem[Har07]{harris:pao}
M.~Harris.
\newblock {Potential automorphy of odd-dimensional symmetric powers of elliptic
  curves, and applications}.
\newblock {\em Algebra, Arithmetic and Geometry--Manin Festschrift, Progress in
  Mathematics, Birkh\"auser, to appear}, 2007.

\bibitem[HSBT06]{hsbt}
M.~Harris, N.~Shepherd-Barron, and R.~Taylor.
\newblock {A family of Calabi-Yau varieties and potential automorphy}.
\newblock {\em preprint}, 2006.

\bibitem[Kat90]{katz1990esde}
N.M. Katz.
\newblock {\em {Exponential Sums and Differential Equations, Ann. Math. Study
  124}}.
\newblock Princeton University Press, 1990.

\bibitem[Kat07]{katz2007ald}
N.M. Katz.
\newblock {Another look at the Dwork family}.
\newblock {\em Algebra, Arithmetic and Geometry--Manin Festschrift, Progress in
  Mathematics, Birkh\"auser, to appear}, 2007.

\bibitem[Kim02]{kim-sym4}
H.H. Kim.
\newblock {Functoriality for the exterior square of $\GL_4$ and the symmetric
  fourth of $\GL_2$}.
\newblock {\em J.A.M.S}, 16(1):139--183, 2002.

\bibitem[KS02]{kim2002csp}
H.~Kim and F.~Shahidi.
\newblock {Cuspidality of symmetric powers with applications}.
\newblock {\em Duke Math. J}, 112(1):177--197, 2002.

\bibitem[MVW84]{mvw}
C.~R. Matthews, L.~N. Vaserstein, and B.~Weisfeiler.
\newblock {Congruence Properties of Zariski-Dense Subgroups I}.
\newblock {\em Proc. Lon. Math. Soc.}, 48:514--532, 1984.

\bibitem[Nor87]{nori1987sgn}
M.V. Nori.
\newblock {On subgroups of $\GL_n(\F_p)$}.
\newblock {\em Invent. Math}, 88(2):257--275, 1987.

\bibitem[Tay03]{taylor2003rcf}
R.~Taylor.
\newblock {Remarks on a conjecture of Fontaine and Mazur}.
\newblock {\em Journal of the Institute of Mathematics of Jussieu},
  1(1):125--143, 2003.

\bibitem[Tay06]{taylor2006asa}
R.~Taylor.
\newblock {Automorphy for some $l$-adic lifts of automorphic mod $l$ Galois
  representations. II}.
\newblock {\em preprint}, 2006.

\end{thebibliography}

\end{document}